\numberwithin{equation}{section}
\theoremstyle{plain}
\newtheorem{theorem}{Theorem}[section]
\newtheorem{lemma}[theorem]{Lemma}
\newtheorem{corollary}[theorem]{Corollary}
\newtheorem{proposition}[theorem]{Proposition}
\theoremstyle{definition}
\newtheorem{assumption}[theorem]{Assumption}
\theoremstyle{remark}
\newtheorem{remark}[theorem]{Remark}
\newtheorem{notation}[theorem]{Notation}
\newcommand{\bH}{\mathbb{H}}
\newcommand{\bI}{\mathbb{I}}
\newcommand{\bQ}{\mathbb{Q}}
\newcommand{\bR}{\mathbb{R}}
\newcommand\cB{\mathcal{B}}
\newcommand\cD{\mathcal{D}}
\newcommand\cH{\mathcal{H}}
\newcommand\cN{\mathcal{N}}
\newcommand\cP{\mathcal{P}}
\newcommand\cQ{\mathcal{Q}}
\newcommand\cM{\mathcal{M}}
\providecommand{\norm}[1]{\lVert#1\rVert}
\providecommand{\bignorm}[1]{\bigl\lVert#1\bigr\rVert}
\renewcommand{\vec}[1]{\boldsymbol{#1}}
\def\Xint#1{\mathchoice
	{\XXint\displaystyle\textstyle{#1}}%
	{\XXint\textstyle\scriptstyle{#1}}%
	{\XXint\scriptstyle\scriptscriptstyle{#1}}%
	{\XXint\scriptscriptstyle\scriptscriptstyle{#1}}%
	\!\int}
\def\XXint#1#2#3{{\setbox0=\hbox{$#1{#2#3}{\int}$}
		\vcenter{\hbox{$#2#3$}}\kern-.5\wd0}}
\def\dashint{\Xint-}
\newcommand{\p}{\partial}
\begin{document}

\title[Mixed boundary value problem]{Mixed boundary value problems for parabolic equations in Sobolev spaces with mixed-norms} 

\author[J. Choi]{Jongkeun Choi}
\address[J. Choi]{Department of Mathematics Education, Pusan National University, Busan 46241, Republic of Korea}

\email{jongkeun\_choi@pusan.ac.kr}

\thanks{
J. Choi was partially supported by the National Research Foundation of Korea (NRF) under agreements  NRF-2019R1F1A1058826 and NRF-2022R1F1A1074461}

\author[H. Dong]{Hongjie Dong}	
\address[H. Dong]{Division of Applied Mathematics, Brown University, 182 George Street, Providence, RI 02912, USA}
\email{hongjie\_dong@brown.edu}
\thanks{H. Dong was partially supported by the Simons Foundation, grant no. 709545, a Simons fellowship, grant no. 007638, and the NSF under agreement DMS-2055244.}

\author[Z. Li]{Zongyuan Li}
\address[Z. Li]{Department of Mathematics, Rutgers University, 110 Frelinghuysen Road, Piscataway, NJ 08854-8019, USA}
\email{zongyuan.li@rutgers.edu}
\thanks{Z. Li was partially supported by an AMS-Simons travel grant.}

\subjclass[2010]{35K20, 35B65, 35R05}
\keywords{Mixed boundary value problem, Parabolic equation, Reifenberg flat domains}

\begin{abstract}
We establish $L_{q,p}$-estimates and solvability for mixed Dirichlet-conormal problems for parabolic equations in a cylindrical Reifenberg-flat domain with a rough time-dependent separation.
\end{abstract}

\maketitle

\section{Introduction}

Let $\cQ^T$ be a cylindrical domain in $\bR^{d+1}$ of the form
$$
\cQ^T=(-\infty, T)\times \Omega,
$$
where $T\in (-\infty, \infty]$ and $\Omega$ is either a bounded or unbounded domain in $\bR^d$, $d \geq 2$.
The lateral boundary of $\cQ^T$ is divided into two components $\cD^T$ and $\cN^T$ separated by $\Gamma^T$, which is allowed to be time-dependent.
See Figure \ref{fig1} below.
We consider mixed boundary value problems for parabolic equations
\begin{equation}		\label{211231@eq1}
	\begin{cases}
		\cP u -\lambda u= D_ig_i+f  & \text{in }\, \cQ^T,\\
		\cB u= g_in_i& \text{on }\, \cN^T,\\
		u = 0 & \text{on }\, \cD^T,
	\end{cases}
\end{equation}
where the operators $\cP$ and $\cB$ are defined by
$$
\cP u=- u_t+D_i (a^{ij}D_j u), \quad \cB u=a^{ij}D_j u n_i,
$$
and $n=(n_1,\ldots, n_d)$ is the outward unit normal to $\partial \Omega$.
The leading coefficients $a^{ij}$ are assumed to be symmetric and satisfy the uniform ellipticity condition.
The boundary value problem \eqref{211231@eq1} arises naturally in mathematical physics and material science dealing with  metallurgical melting, combustion, and wave phenomena, etc.
We refer the reader to \cite{MR0216018, MR647124, MR719445, MR1175754,MR1804512,MR1799414, MR2321139} and references therein.
It is also partly motivated by modeling  exocytosis, which have a form of active transport mechanism. See \cite{MR3190265}.

\begin{figure}[b]		\label{fig1}
\begin{tikzpicture}
	\draw [->] (-2,-3.3) -- (-2, 0) node (taxis) [right] {$t$};
	\draw [->] (-2,-3.3) -- (2,-3.3) node (xaxis) [above] {$x$};
	\draw (0,-0.2) ellipse (1.5 and 0.2);
	\draw (-1.5,-0.2) -- (-1.5,-3);
	\draw[snake=zigzag, red] (0,-0.4) -- (0,-3.2);
	\draw (-1.5,-3) arc (180:360:1.5 and 0.2);
	\draw [dashed] (-1.5,-3) arc (180:360:1.5 and -0.2);
	\draw (1.5,-0.2) -- (1.5,-3);  
	\fill [gray,opacity=0.3] (-1.5,-0.2) -- (-1.5,-3) arc (180:360:1.5 and 0.2) -- (1.5,-0.2) arc (0:180:1.5 and -0.2);
	\node[left] at (0,-0.6) {$\Gamma$};
	\node at (1,-1.8) {$\cD$};
	\node at (-0.8,-1.8) {$\cN$};
	\node at (1.8,-0.2) {$\cQ$};
\end{tikzpicture}
\caption{Cylindrical domain with time-dependent separation}
\end{figure}
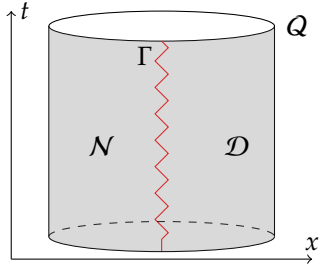

In a recent paper \cite{MR4387198}, we proved the unique  solvability in unmixed-norm Sobolev spaces $\cH^1_p$ (see \eqref{eqn-220206-0348} and \eqref{eqn-220206-0353}) for the problem \eqref{211231@eq1} when the coefficients $a^{ij}$ have small bounded mean oscillation (BMO) with respect to all the variables $(t,x)$, the base domain $\Omega$ is Reifenberg-flat, and the separation $\Gamma$ is locally close to the graph of a Lipschitz function of $m$ variables, where
$$
m\in \{0,1,\ldots, d-2\} \quad \text{and} \quad p\in \bigg(\frac{2(m+2)}{m+3}, \frac{2(m+2)}{m+1}\bigg).
$$
For precise conditions on the domain and separation, see Assumption \ref{A11}.
Notice that if $\Gamma$ is Reifenberg-flat of codimension $2$ (i.e., $m=0$),
such range $p\in (4/3, 4)$ is optimal even in the stationary case, in view of the following classical example
\begin{equation*}
	u(x,y)= \operatorname{Im}\,(x+iy)^{1/2}
\end{equation*}
which is harmonic on the upper half space $\{y>0\}$, and
\begin{equation*}
	u=0\,\,\text{on}\,\,\{x>0, y=0\},\quad \p u/\p\vec{n}=0 \,\,\text{on}\,\,\{x<0, y=0\}.
\end{equation*}
For other previous results on the mixed boundary value problems for parabolic equations in unmixed-norm spaces, we refer the reader to \cite{MR322377, MR678501, MR3804727} and the references therein.
We note that in these work, either $p$ is assumed to be $2$ or an implicit condition is imposed on the operator so that $p$ needs to be sufficiently close to $2$.

It is worth mentioning that here we focus on the case when $d \geq 2$ and the two types of boundary conditions touch at an nonempty $\Gamma^T$. This is the case when the difficulty of ``optimal regularity'' appears. 
When $\overline{\cD}\cap \overline{\cN}=\emptyset$, we can just apply the estimates for the pure Dirichlet and pure conormal problems and a partition of unity argument. In this case, the solutions are smooth given that the domain, the operator, and the boundary data are all smooth. In particular, when $d=1$, $\Omega$ is actually an interval $(a,b)$. The mixed problems just mean that different boundary conditions are assigned at $x= a$ and $x=b$.

In this paper, we extend the result in \cite{MR4387198}  by proving $L_q^t(L_p^x)$ mixed-norm estimates for $Du$ and the $\cH^1_{q,p}$ (see \eqref{eqn-220206-0348}) solvability when
$$
p\in \bigg(\frac{2(m+2)}{m+3}, \frac{2(m+2)}{m+1}\bigg) \quad \text{and} \quad q\in \bigg[p, \frac{2p}{(m+1)(p-2)_+}\bigg),
$$
under the same smoothness assumptions on the coefficients,  domain,  and  separation.
In the special case when $\Gamma$ is time-independent, we get the solvability for all
\begin{equation} \label{eqn-220714-0115}
	p\in \bigg(\frac{2(m+2)}{m+3}, \frac{2(m+2)}{m+1}\bigg) \quad \text{and} \quad q\in (1,\infty).
\end{equation}
In particular, when $\p\Omega$ and $\Gamma^T$ are smooth enough, we can make a change of variables to locally flatten $\p\Omega$ first and then make $\Gamma^T$ to be time-independent. Hence, the full solvability range in \eqref{eqn-220714-0115} is achieved.

In \cite{MR1444765} Savar\'{e} considered parabolic equations in a cylindrical domain with $C^{1,1}$ base domain and separation.
Under a uniform linear bound condition on the excess of the separation with respect to $t$, he proved the unique solvability in 
$L_2$-based Sobolev spaces.
We also refer the reader to Hieber-Rehberg \cite{MR2403322} for quasilinear parabolic systems of reaction-diffusion equations in a cylindrical domain with Lipschitz base domain in $\bR^d$ for $d=2, \, 3$ and time-independent separation.
Assuming an implicit topological isomorphism condition on the second-order operator, they established the solvability in mixed-norm spaces with $p=2$ and $q>c$ for some $c$ depending on the operator and dimension.
To the best of the authors' knowledge, our results regarding the mixed boundary value problem are the first to deal with mixed-norm estimates for $p\neq 2$ 
 even  in the case when $\partial \Omega$ and $\Gamma$ are smooth and $\Gamma$ is time-independent.
For other previous results on mixed-norm estimates for purely Dirichlet or conormal derivative boundary value problems, we refer the reader to \cite{MR2352490, MR2561181, MR3231530, MR3630407, MR3812104, MR3947859, MR4387945} and references therein.

The proof in \cite{MR4387198} relies on a decomposition argument using a carefully designed cut-off function near the separation $\Gamma$ and on a  level set method with  the measure theoretic ``crawling of ink spots" lemma originally due to Krylov and Safonov \cite{MR0579490,MR0563790}.
While in \cite{MR4387198} we used the decomposition argument and estimates in $L_2$-based spaces, in this paper, to prove our main result, we refine the decomposition argument in \cite{MR4387198} in the setting of $L_p$-based spaces for $p<2$ and exploit an idea of Krylov \cite{MR2352490} to utilize the level set method in the $t$-variable only.
Because the decomposition argument fails for $p>2$, it remains open whether the mixed-norm estimates hold for
$$
p\in \bigg(2, \frac{2(m+2)}{m+1}\bigg), \quad q\in \bigg(\frac{2p}{(m+1)(p-2)}, \infty\bigg)
$$
when $\Gamma$ is time-dependent.
See the explanation after \eqref{eqn-211220-0457}.

The remainder of the paper is organized as follows.
In the next section, we introduce some notation and state our main result of the paper.
In Section \ref{S3}, we derive certain local estimates, which are used in Section \ref{sec-211227-0648} for the level set argument in the $t$-variable.
Finally, we complete the proof of the main result in Section \ref{S5}.

\section{Notation and main result}\label{sec-notation}

We first introduce some notation used throughout the paper.
We use $X=(t,x)$ to denote a generic point in the Euclidean space $\bR^{d+1}$, where $d\ge 2$ and $x=(x^1,\ldots, x^d)\in \bR^d$.
We also write $Y=(s, y)$ and $X_0=(t_0, x_0)$, etc.
Let $\cQ$ be a cylindrical domain in $\bR^{d+1}$ of the form
$$
\cQ=(-\infty, \infty)\times \Omega,
$$
where $\Omega$ is a domain in $\bR^d$.
We assume that the lateral boundary of $\cQ$, denoted by $\partial \cQ=(-\infty, \infty)\times \partial \Omega$, is divided into two components $\cD$ and $\cN$ separated by $\Gamma$, i.e.,
as in  \cite{MR4387198},
$\cD\subset \partial \cQ$ is an open set (relative to $\partial \cQ$) and
$$
\cN=\partial \cQ\setminus \cD, \quad \Gamma=\overline{\cD}\cap \overline{\cN}.
$$
Note that the separation $\Gamma$ is allowed to be time-dependent.
For $T\in (-\infty, \infty]$, we define
$$
\cQ^T=\{X\in \cQ: t<T\}
$$
and similarly define $\cD^T$, $\cN^T$, and $\Gamma^T$.
For $R>0$, we denote the parabolic cylinders by
$$
\begin{aligned}
Q_R(X)&=(t-R^2, t)\times B_R(x), \\
\bQ_R(X)&=(t-R^2, t+R^2)\times B_R(x),\\
\cQ_R(X)&=\cQ\cap Q_R(X),
\end{aligned}
$$
where $B_R(x)$ is the usual Euclidean ball of radius $R$ centered at $x$.
The center will be omitted when it is the origin, i.e.,  for instance, we write $Q_R$ for $Q_R(0)$.

For a function $u$ on an open set $Q\subset \bR^{d+1}$, we set
$$
(u)_Q=\frac{1}{|Q|} \int_Q u\, dX=\dashint_{Q} u\,dX,
$$
where $|Q|$ is the $d+1$-dimensional Lebesgue measure of $Q$.
For $p,q\in [1, \infty)$, we define the mixed-norm on $Q$ by
$$
\|u\|_{L^t_qL^x_p(Q)}=\Bigg(\int_{\bR} \Bigg(\int_{\bR^d} |u|^p \bI_{Q}\,dx\Bigg)^{q/p}\,dt \Bigg)^{1/q},
$$
where $\bI_{Q}$ is the usual characteristic function.
Similarly, we define $L_q^tL_p^x$-norms with $p=\infty$ or $q=\infty$, and $L^x_p L^t_q$-norms.
We often write
$L_{q,p}$ and $L_p$ for $L_q^tL_p^x$ and $L_{p,p}$.

We set
$$
W^{0,1}_{q,p}(\cQ^T)=\{u: u\in L_{q,p}(\cQ^T),\, D_{x} u\in L_{q,p}(\cQ^T)^d\}
$$
with the norm
$$
\|u\|_{W^{0,1}_{q,p}(\cQ^T)}=\|u\|_{L_{q,p}(\cQ^T)}+\|Du\|_{L_{q,p}(\cQ^T)},
$$
and we denote by $W^{0,1}_{q,p, \cD^T}(\cQ^T)$ the closure of $C^\infty_{\cD^T} (\cQ^T)$ in $W^{0,1}_{q,p}(\cQ^T)$, where
$C^\infty_{\cD^T}(\cQ^T)$ is the set of all infinitely differentiable functions on $\bR^{d+1}$ having a compact support in $\overline{\cQ^T}$ and vanishing in a neighborhood of $\cD^T$.
We also set
\begin{equation}\label{eqn-220206-0348}
\cH_{q,p, \cD^T}^1 (\cQ^T)=\big\{u: u\in W^{0,1}_{q,p, \cD^T}(\cQ^T),\,  u_t\in \bH^{-1}_{q,p, \cD^T}(\cQ^T)\big\}
\end{equation}
with the norm
$$
\|u\|_{\cH^1_{q,p, \cD^T} (\cQ^T)}=\|u\|_{W^{0,1}_{q,p}(\cQ^T)}+\|u_t\|_{\bH^{-1}_{q,p,\cD^T}(\cQ^T)},
$$
where by $u_t\in \bH^{-1}_{q,p, \cD^T}(\cQ^T)$, we mean that there exist $g=(g_1,\ldots, g_d)\in L_{q,p}(\cQ^T)^d$ and $f\in L_{q,p}(\cQ^T)$ satisfying
$$
u_t=D_i g_i+f \quad \text{in }\, \cQ^T, \quad g_i n_i=0 \quad \text{on }\, \cN^T
$$
in the following distribution sense
\begin{equation*}
(u_t, \varphi):=\int_{\cQ^T} -u \varphi_t\,dX = \int_{\cQ^T} (-g_iD_i \varphi+f\varphi)\,dX
\end{equation*}
for all $\varphi \in C^\infty_{\cD^T}(\cQ^T)$ vanishing at $t=T$,
and that
$$
\|u_t\|_{\bH^{-1}_{q,p, \cD^T}(\cQ^T)}=\inf \big\{ \|g\|_{L_{q,p}(\cQ^T)}+\|f\|_{L_{q,p}(\cQ^T)}: u_t=D_i g_i+f\, \text{ in }\, \cQ^T,\ g_i n_i =0 \, \text{ on }\, \cN^T\big\}
$$
is finite.
We abbreviate
\begin{equation}\label{eqn-220206-0353}
	\cH^1_{p,p, \cD^T} (\cQ^T)=\cH^1_{p, \cD^T}(\cQ^T).
\end{equation}
Throughout this paper, we discuss weak solutions to the problem \eqref{211231@eq1}, which means the following integral identity holds for all $\varphi \in C^\infty_{\cD^T}(\cQ^T)$ vanishing at $t=T$,
\begin{equation}\label{eqn-220715-1259}
	\int_{\cQ^T} u\varphi_t \,dX+ \int_{\cQ^T} (-a^{ij}D_j u D_i \varphi - \lambda u \varphi )\,dX= \int_{\cQ^T} (-g_i D_i \varphi + f\varphi)\,dX.
\end{equation}
We also discuss ``local weak solutions'' as, for instance, in \eqref{eqn-220715-1258}, in which case, we mean that \eqref{eqn-220715-1259} holds with $f=0$ for any test function
$\varphi \in
C^\infty(\overline{\cQ_R})$ vanishing on $\p\cQ_R \setminus \cN$.

\subsection{Assumptions and main result}
Throughout this paper, we assume that the leading coefficients $a^{ij}$ of the operator $\cP$ are symmetric and satisfy the uniform ellipticity condition
$$
a^{ij}(X)\xi_j \xi_i \ge \Lambda |\xi|^2, \quad |a^{ij}(X)|\le \Lambda^{-1}
$$
for all $X\in \bR^{d+1}$, $\xi\in \bR^d$, and for some constant $\Lambda\in (0,1]$.
Regarding the symmetric condition, see Remark \ref{220719@rmk1} for an explanation.
We impose the following small BMO condition on the leading coefficients, where $\theta\in (0,1)$ is a parameter to be specified.

\begin{assumption}[$\theta$]\label{A2}
For any $X_0\in \overline{\cQ}$ and $R\in (0, R_0]$, we have
	$$
	\dashint_{\cQ_R(X_0)}|a^{ij}(X)-(a^{ij})_{\cQ_R(X_0)}|\,dX\le \theta.
	$$
\end{assumption}

We also impose the following regularity assumptions on the boundary of the domain and separation, where $\gamma\in (0,1)$ is a parameter to be specified.

\begin{assumption}[$\gamma; m, M$]		\label{A11}
	Let $m\in \{0,1,\ldots, d-2\}$ and $M\in (0, \infty)$.
	\begin{enumerate}[$(a)$]
		\item
		For any $x_0\in \partial \Omega$ and $R\in (0, R_0]$, there is a coordinate system depending on $x_0$ and $R$ such that in this coordinate system, we have
		\begin{equation}		\label{200429@eq1}
			\{y: y^1>x_0^1+\gamma R\}\cap B_R(x_0)\subset \Omega \cap B_R(x_0)\subset \{y: y^1>x_0^1-\gamma R\}\cap B_R(x_0).
		\end{equation}
		
		\item
		For any $X_0=(t_0,x_0)\in \Gamma$ and $R\in (0, R_0]$,   there exist
		a spatial coordinate system and a Lipschitz function $\phi$ of $m$ variables with Lipschitz constant $M$,  such that in the new coordinate system, we have  \eqref{200429@eq1},
		$$
		\big(\partial \cQ\cap \bQ_R(X_0) \cap \{(s,y): y^2>\phi(y^3,\ldots,y^{m+2})+\gamma R\} \big)\subset \cD,
		$$
		$$
		\big(\partial \cQ \cap \bQ_R(X_0)\cap  \{(s,y): y^2<\phi(y^3,\ldots, y^{m+2})-\gamma R\}\big)\subset \cN,
		$$
		and
		$$
		\phi(x_0^3,\ldots, x_0^{m+2})=x_0^2.
		$$
		Here, if $m=0$, then the function $\phi$ is  understood as the constant function $\phi\equiv x_0^2$.
	\end{enumerate}
\end{assumption}

Noting that Assumption \ref{A11} $(\gamma;0,M)$ holds when
$\p\Omega$ is locally given by the graph $\{x^1=\psi(x^2,\ldots,x^d)\}$ and $\Gamma$ is locally given by its intersection with $\{x^2=\tilde{\psi}(t,x^1,x^3,\ldots,x^d)\}$, where $\psi$ and $\tilde{\psi}$ are Lipschitz functions ($\tilde{\psi}$ in the parabolic metric) with correspondingly small constants.
The assumption also includes certain fractal structures.

The main result in the current paper reads as follows.

\begin{theorem}\label{thm-201022-0843}
Let
$R_0\in (0,1]$,  $m\in \{0,1,\ldots,d-2\}$, $M\in (0, \infty)$, and let
\begin{equation}		\label{220203@eq1a}
p\in \bigg(\frac{2(m+2)}{m+3}, \frac{2(m+2)}{m+1}\bigg), \quad q\in \bigg(p, \frac{2p}{(m+1)(p-2)_+}\bigg).
\end{equation}
There exist constants $\theta, \gamma\in (0,1)$ and $\lambda_0\in (0, \infty)$ with
$$
(\theta, \gamma) = (\theta, \gamma)(d,\Lambda,M,p,q), \quad \lambda_0=\lambda_0(d,\Lambda, M, p,q, R_0),
$$
such that if Assumptions \ref{A2} $(\theta)$ and \ref{A11} $(\gamma;m,M)$ are satisfied, then we have the following.
For any $\lambda\ge \lambda_0$, $g=(g_1,\ldots, g_d)\in L_{q,p}(\cQ^T)^d$, and $f\in L_{q,p}(\cQ^T)$, there exists a unique solution $u\in \cH^1_{q,p, \cD^T}(\cQ^T)$ to
\begin{equation}		\label{220203@eq1}
	\begin{cases}
		\cP u -\lambda u= D_ig_i+f  & \text{in }\, \cQ^T,\\
		\cB u= g_in_i& \text{on }\, \cN^T,\\
		u = 0 & \text{on }\, \cD^T
	\end{cases}
\end{equation}
satisfying
\begin{equation}\label{eqn-201028-0521}		\norm{Du}_{L_{q,p}(\cQ^T)}+\lambda^{1/2}\norm{u}_{L_{q,p}(\cQ^T)} \le C \norm{g}_{L_{q,p}(\cQ^T)}+C\lambda^{-1/2}\norm{f}_{L_{q,p}(\cQ^T)},
	\end{equation}
where $C=C(d, \Lambda, M, p, q)$.
The same result holds for any $p,q$ satisfying
$$
p\in \bigg(\frac{2(m+2)}{m+3}, \frac{2(m+2)}{m+1}\bigg), \quad q\in (1, \infty)
$$
instead of \eqref{220203@eq1a} when $\Gamma$ is time-independent.
\end{theorem}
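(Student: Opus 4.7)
The plan is to prove the a priori $L_{q,p}$ estimate \eqref{eqn-201028-0521} for sufficiently smooth solutions, then close solvability by the method of continuity, using the unmixed-norm $L_p$-solvability from \cite{MR4387198} as the base case. Since the bound on $\lambda^{1/2}\|u\|_{L_{q,p}(\cQ^T)}$ follows from the bound on $\|Du\|_{L_{q,p}(\cQ^T)}$ by a standard energy/duality argument in the mixed-norm spaces, the bulk of the effort is controlling $Du$ in $L_{q,p}$.

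The first step, occupying Section~\ref{S3}, is a local mean-oscillation estimate for $Du$. For a local weak solution $u$ with homogeneous boundary data in $\cQ_R(X_0)$, I would refine the decomposition argument of \cite{MR4387198} in $L_{p_0}$-based spaces for some $p_0<2$ chosen strictly inside $\bigl(\tfrac{2(m+2)}{m+3},2\bigr)$, with the goal of producing a decay estimate of the form
$$
\bigl(|Du-(Du)_{\cQ_r(X_0)}|^{p_0}\bigr)^{1/p_0}_{\cQ_r(X_0)}\ \lesssim\ \Bigl(\tfrac{r}{R}\Bigr)^{\alpha}\bigl(|Du|^{p_0}\bigr)^{1/p_0}_{\cQ_R(X_0)}+\textup{(data terms)}
$$
for $r\le R/2$ and some $\alpha>0$, together with its variant that sees the data $g,f$ on the right-hand side. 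Working strictly below $p=2$ is essential here: the cutoff near $\Gamma$ in the decomposition generates a commutator term that is tame in $L_{p_0}$-type norms only when $p_0<2$, which is precisely the source of the two-sided range on $p$ in \eqref{220203@eq1a}.

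With this spatial estimate in hand, the passage from $L_p$ to mixed $L_{q,p}$ for $q\ge p$ is implemented by Krylov's level-set argument in the $t$-variable only, following \cite{MR2352490}; this is the content of Section~\ref{sec-211227-0648}. The key point is that, after integrating out the spatial variable, the local estimate above yields a pointwise-in-$t$ sharp function bound of the form $U^{\sharp}(t)\lesssim (\cM G)(t)$, where $U$ is a suitable spatial $L_{p_0}$-norm of $Du(t,\cdot)$, $\cM$ is the one-dimensional Hardy--Littlewood maximal operator in $t$, and $G$ is built from the $L_{q,p}$-norms of $g$ and $f$. Applying the Fefferman--Stein theorem in the single variable $t$ with exponent $q/p_0>1$ then yields the desired mixed-norm bound on $Du$. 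The upper endpoint $q<\tfrac{2p}{(m+1)(p-2)_+}$ in \eqref{220203@eq1a} corresponds exactly to the requirement that $p_0$ can be chosen simultaneously strictly below $2$ and strictly below $q$: when $p\le 2$ this is automatic and no upper constraint on $q$ arises, while for $p>2$ it forces the stated ceiling.

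The main obstacle, the one explicitly flagged in the discussion after \eqref{eqn-211220-0457}, is the $L_{p_0}$-decomposition step near the rough, time-dependent separation $\Gamma$: the argument has no dual counterpart that would function for $p>2$ at large $q$, which is what keeps the time-dependent range one-sided. In the time-independent case, by contrast, the level-set step decouples from the spatial problem: one may freeze $t$ and appeal slice by slice to the stationary mixed Dirichlet-conormal theory (further flattening $\partial\Omega$ by a change of variables that preserves the time-independence of $\Gamma$), after which a standard parabolic maximal-regularity argument—operator-valued Fourier multipliers applied to the heat semigroup associated with the mixed problem—extracts the full range $q\in(1,\infty)$ for every admissible $p$.
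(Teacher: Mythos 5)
Your high-level strategy (unmixed $L_p$ theory as the base, a decomposition that only works for exponents below $2$, and a Krylov-style argument in the $t$-variable only) matches the paper, but two of your key steps would fail as stated. First, the local mean-oscillation decay estimate $\bigl(|Du-(Du)_{\cQ_r(X_0)}|^{p_0}\bigr)^{1/p_0}_{\cQ_r(X_0)}\lesssim (r/R)^{\alpha}\bigl(|Du|^{p_0}\bigr)^{1/p_0}_{\cQ_R(X_0)}+\dots$ is false when $\cQ_r(X_0)$ meets $\Gamma$: for the model solution $\operatorname{Im}(x+iy)^{1/2}$ the gradient blows up like $r^{-1/2}$ at the separation, so no Campanato-type decay (which would imply local boundedness of $Du$) can hold; what is actually available is only a reverse H\"older improvement up to the critical exponent $2(m+2)/(m+1)$ (Lemma \ref{lem-211217-0940}~$(b)$) together with an $L^x_pL^t_\infty$ bound for the frozen-coefficient, time-independent-separation auxiliary problem (Lemma \ref{lem-200913-1026}). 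Consequently your pointwise bound $U^{\sharp}(t)\lesssim (\cM G)(t)$ and the Fefferman--Stein step cannot be run: in the correct decomposition $\Phi_U\le \Phi_{W,R}+\Phi_{V,R}$ the ``good'' part $\Phi_{V,R}$ is merely \emph{bounded} by $C\,\cM(\Phi_U^p)^{1/p}$ with a large constant, not close to a constant in $t$, so the sharp function of $\Phi_U$ is controlled by $C\,\cM(\Phi_U^p)^{1/p}$ with $C\ge 1$ and nothing can be absorbed. This is precisely why the paper replaces the sharp-function mechanism by the level-set/good-$\lambda$ argument with the ``crawling of ink spots'' lemma (Lemma \ref{lem-211226-1120} and Corollary \ref{cor-211120-1226}), where boundedness of $\Phi_{V,R}$ plus Chebyshev for $\Phi_{W,R}$ (small by the factor $(\theta+\gamma)^{\tau}$) suffices.

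Second, your treatment of the two remaining regimes is missing or unjustified. For time-dependent $\Gamma$ and $p>2$ your argument produces nothing (the decomposition and your $L_{p_0}$-based sharp function live at spatial exponents below $2$), and your explanation of the ceiling $q<\frac{2p}{(m+1)(p-2)_+}$ as a constraint on choosing $p_0$ is incorrect: in the paper this range arises from complex interpolation (Riesz--Thorin together with $[L^t_{\tilde q}(L^x_2),L^t_{\tilde p}(L^x_{\tilde p})]_\vartheta=L^t_q([L^x_2,L^x_{\tilde p}]_\vartheta)$) between the $\cH^1_{\tilde q,2}$ result with $\tilde q$ large and the unmixed $\cH^1_{\tilde p}$ result with $\tilde p$ close to $2(m+2)/(m+1)$; this gives exactly the shaded triangle in Figure \ref{fig:1/p-1/q}. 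For time-independent $\Gamma$, your proposal to quote stationary theory slice-by-slice and then invoke operator-valued Fourier multiplier/maximal regularity for the associated semigroup is not available here: the coefficients $a^{ij}(t,x)$ are non-autonomous with only small BMO regularity on a Reifenberg-flat domain with rough separation, and maximal $L_q(L_p)$-regularity for this mixed-boundary operator is exactly what is being proved, not a known input (the paper notes its results are the first mixed-norm ones with $p\ne 2$ even for smooth data). The paper instead reruns the decomposition without the cutoff $\chi$ (so Theorem \ref{MT1} applies to the auxiliary problem for the full range of $p$), obtaining $q>p$ by the same level-set argument and $q<p$ by duality. Finally, your ``a priori estimate plus method of continuity'' framing needs care, since the reference problem must be solvable in $\cH^1_{q,p,\cD^T}$ itself; the paper avoids this by starting from the $\cH^1_p$ solution of Theorem \ref{MT1} and upgrading its regularity via Proposition \ref{prop-200819-0953}, then absorbing the lower-order term by a cutoff in time and taking $\lambda$ large.
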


In Figure \ref{fig:1/p-1/q}, we draw a diagram to show the range of $(p,q)$ in \eqref{220203@eq1a}.
\begin{figure}[h]		
\begin{tikzpicture}[scale=1]
	\draw[fill=gray!30] (2,0) -- (3,0) -- (3,3) -- (1,1) -- cycle;\draw [<->,thick] (0,4.5) node (yaxis) [above] {$1/q$} |- (5,0) node (xaxis) [right] {$1/p$};
	\draw (4.3,3.4) node {$q=p$};
	\draw (0,0)  -- (4,4);
	\draw[dashed] (1,1) -- (1,0) node[below] {$\frac{m+1}{2(m+2)}$};
	\draw[dashed] (2,2) -- (2,0) node[below] {$\frac{1}{2}$};
	\draw (3,3) -- (3,0) node[below] {$\frac{m+3}{2(m+2)}$};
	\draw (1,1) -- (2,0);
\end{tikzpicture}
\caption{Range of $(p,q)$} \label{fig:1/p-1/q}
\end{figure}
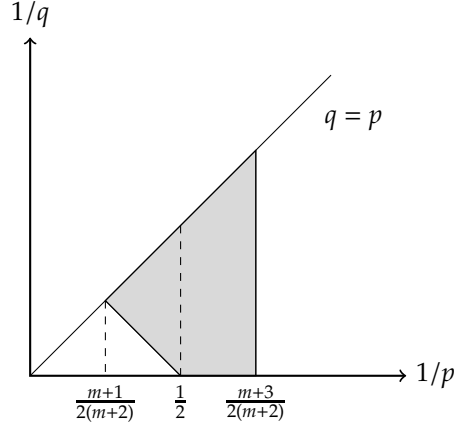
\begin{remark}
It is not clear to us if Theorem \ref{thm-201022-0843} still holds when $p\in \big(2, \frac{2(m+2)}{m+1}\big)$, $q\in \big(\frac {2p}{(m+1)(p-2)},\infty\big)$, and $\Gamma$ is time-dependent. In fact, the decomposition argument in Section \ref{S6} fails if $p>2$.
\end{remark}
\begin{remark}
	\begin{enumerate}[$(a)$]
		\item
Based on the method of continuity, one can easily extend the results in Theorem \ref{thm-201022-0843} to parabolic operators with bounded lower-order terms
$$
\begin{aligned}
\cP u&=- u_t+D_i (a^{ij}D_j u+a^i u)+b^i D_i u+cu,\\
\cB u&= (a^{ij}D_j u+a^i u)n_i,
\end{aligned}
$$
at the cost of possibly increasing the constant $\lambda_0$.

		\item From Theorem \ref{thm-201022-0843}, we can also obtain the solvability of the initial boundary value problem on $(0,T)\times \Omega$ with the zero initial condition. In this case, we can take $\lambda_0=0$ with the help of the standard trick of considering $e^{-\lambda_0 t}u$, cf. \cite[Theorem 8.2 (iii)]{MR2835999}.
	\end{enumerate}
\end{remark}

\begin{remark}		\label{220719@rmk1}
In theorem \ref{thm-201022-0843} and throughout the paper, we require the symmetry of the coefficients $a^{ij}$ for the optimal range $p\in (4/3, 4)$ when $m=0$ for mixed boundary value problems.
We refer the reader to \cite[Theorem 4.1]{MR4261267} and \cite[Proposition~4.4]{MR4387198} for the optimal estimates for model problems - Laplace and heat equations with flat boundary and separation.
Notice that if $a^{ij}$ is not symmetric, then the range of $p$ can be more restrictive. See \cite[Example 2.8]{MR4261267}.
\end{remark}

\section{Preliminary estimates}		\label{S3}

Hereafter in this paper, we use the following notation.
\begin{notation}
For nonnegative (variable) quantities $A$ and $B$, we denote $A \lesssim B$ if there exists a generic positive constant $C$ such that $A \le CB$.
We add subscript letters like $A\lesssim_{a,b} B$ to indicate the dependence of the implicit constant $C$ on the parameters $a$ and $b$.
\end{notation}

\begin{notation}
For a given constant $\lambda\ge 0$ and functions $u$, $f$, and $g=(g_1,\ldots, g_d)$, we write
$$
U=|Du|+\lambda^{1/2} |u|, \quad F=|g|+\lambda^{-1/2} |f|,
$$
where we take $f=0$ and $F=|g|$ whenever $\lambda=0$.
\end{notation}
The following constants in Assumptions \ref{A2} and \ref{A11} are fixed throughout the paper:
$$
	R_0\in (0,1],  \quad m\in \{0,1,\ldots,d-2\}, \quad M\in (0, \infty).
$$

\subsection{\texorpdfstring{$\cH^1_p$}{H1p} solvability and localization}

In this section, we derive some local estimates, in the proofs of which, we shall use the unmixed-norm $L_p$-estimates proved in  \cite{MR4387198}.
For the reader’s convenience, we present here the main result in \cite{MR4387198}.

\begin{theorem}[{\cite[Theorem 2.4]{MR4387198}}]		\label{MT1}
Let $p\in \bigg(\frac{2(m+2)}{m+3}, \frac{2(m+2)}{m+1}\bigg)$.
There exist constants $\theta, \gamma\in (0,1)$ and $\lambda_0\in (0, \infty)$ with
$$
(\theta, \gamma)=(\theta, \gamma)(d, \Lambda, M, p), \quad \lambda_0=\lambda_0(d,\Lambda, M, p, R_0),
$$
such that if Assumptions \ref{A2} $(\theta)$ and \ref{A11} $(\gamma;m,M)$ are satisfied, then
the following assertions hold.
\begin{enumerate}[$(a)$]
\item
For any $\lambda\ge \lambda_0$, $g=(g_1,\ldots, g_d)\in L_{p}(\cQ^T)^d$, and $f\in L_{p}(\cQ^T)$, there exists a unique solution $u\in \cH^1_{p, \cD^T}(\cQ^T)$ to \eqref{220203@eq1}, which satisfies
$$
\|U\|_{L_p(\cQ^T)}\lesssim_{d, \Lambda, M, p} \|F\|_{L_p(\cQ^T)}.
$$
\item
Let $T\in (0, \infty)$.
For any $g=(g_1,\ldots, g_d)\in L_{p}(\tilde{\cQ})^d$ and $f\in L_{p}(\tilde{\cQ})$, there exists a unique solution $u\in \cH^1_{p, \tilde{\cD}}(\tilde{\cQ})$ to the initial boundary value problem
$$
	\begin{cases}
		\cP u = D_ig_i+f  & \text{in }\, \tilde{\cQ}:=(0, T)\times \Omega,\\
		\cB u= g_in_i& \text{on }\, \tilde{\cN}:=((0, T)\times \partial \Omega)\cap \cN,\\
		u = 0 & \text{on }\, \tilde{\cD}:=((0, T)\times \partial \Omega)\cap \cD,\\
		u=0 & \text{on }\, \{0\}\times \Omega,
	\end{cases}
$$
which satisfies
\begin{equation}		\label{220725@eq1}
\|u\|_{\cH^1_{p, \tilde{\cD}}(\tilde{\cQ})}\lesssim_{d, \Lambda, M, p, R_0, T} \|g\|_{L_p(\tilde{\cQ})}+\|f\|_{L_{p}(\tilde{\cQ})}.
\end{equation}
\end{enumerate}
\end{theorem}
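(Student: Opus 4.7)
The plan is to reduce the problem to model problems (flat boundary, flat separation, constant coefficients) and then control the error through a mean oscillation / level set argument. Because this is an unmixed $L_p$ estimate, the proof should not require the idea of using the level set argument only in the $t$-variable (which is what the current paper introduces for the mixed-norm extension); instead, a standard full parabolic level set / sharp function approach works.

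Step one is to nail down the model case: the heat equation with constant coefficients on the upper half-space with separation $\{x^2=\phi(x^3,\dots,x^{m+2})\}$ for $\phi\equiv 0$, Dirichlet on one side and conormal on the other. Here one would use separation of variables in the $(x^1,x^2)$-plane (reducing to the classical harmonic half-plane example $\operatorname{Im}(x+iy)^{1/2}$ up to smooth factors) together with semigroup/heat kernel estimates in the remaining variables to establish optimal $L_p$ estimates for $p\in\bigl(\frac{2(m+2)}{m+3},\frac{2(m+2)}{m+1}\bigr)$. This is the step that pins down the range of $p$ — outside this interval the singularity $r^{1/2}$ near $\Gamma$ prevents $D u$ from being in $L_p$.

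Step two is a Campanato/mean-oscillation-type local estimate: for a local weak solution $u$ of the homogeneous equation on $\cQ_R(X_0)$, show that one can decompose $u=w+(u-w)$, where $w$ solves the model problem (frozen coefficients, straightened domain and separation), so that $Dw$ enjoys the optimal Hölder/$L_\infty$-bound coming from the model estimate, while $\|u-w\|$ is controlled by the smallness parameters $\theta$ (from small BMO) and $\gamma$ (from Reifenberg flatness and the Lipschitz graph assumption on $\Gamma$), times natural norms of the data. The technical heart here is the decomposition argument near $\Gamma$ with a carefully chosen cut-off function adapted to the singular scale of $\Gamma$.

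Step three combines these with the Krylov–Safonov ``crawling of ink spots'' lemma (applied to parabolic level sets $\{\cM(|Du|^p)>\mu\}$) to upgrade the mean oscillation bound into a global $L_p$ estimate; iterating absorbs the small error term $\theta+\gamma$ by choosing $\theta,\gamma$ small depending on $p$. Standard manipulation yields the $\lambda^{1/2}\|u\|_{L_p}$ term by taking $\lambda\ge \lambda_0$ large. Existence in part (a) then follows from the method of continuity starting from $\cP_0 u-\lambda u=-\Delta u-u_t-\lambda u$ on the same geometry (whose solvability can be treated by $L_2$ Hilbert-space methods from \cite{MR4387198}). Part (b) is derived from part (a) by extending data by zero in $t<0$ and using uniqueness. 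The main obstacle is step two: obtaining the decomposition uniformly down to $\Gamma$ when $\Gamma$ is only a rough time-dependent graph, since the natural scaling of the singular solution is anisotropic in the variables parallel and transverse to $\Gamma$.
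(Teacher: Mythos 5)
First, a point of comparison: this paper does not prove Theorem \ref{MT1} at all --- it is quoted verbatim from the authors' earlier work \cite{MR4387198}, so the only ``in-paper proof'' is that citation. Your outline does track the strategy of that cited paper as summarized in the introduction (optimal estimates for the flat model problem, a decomposition with a carefully designed cut-off near $\Gamma$, a level set argument with the Krylov--Safonov ``crawling of ink spots'' lemma, and the method of continuity for existence), so at the level of skeleton you are on the right track.

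However, there is a genuine gap in your Step two. You claim that the model/homogeneous part $w$ of the decomposition ``enjoys the optimal H\"older/$L_\infty$-bound'' for $Dw$. This is false for the mixed Dirichlet--conormal problem: even for the heat (or Laplace) equation with flat boundary and flat separation, the gradient of the homogeneous solution blows up like $\operatorname{dist}(\cdot,\Gamma)^{-1/2}$ near $\Gamma$ (this is exactly the $\operatorname{Im}(x+iy)^{1/2}$ example you invoke in Step one), so no $L_\infty$ or H\"older bound for $Dw$ up to $\Gamma$ is available. Moreover, if such a bound did hold, the standard Caffarelli--Peral/Krylov level-set iteration you describe in Step three would deliver the $L_p$ estimate for every $p\in(1,\infty)$, contradicting the optimality of the range $\bigl(\tfrac{2(m+2)}{m+3},\tfrac{2(m+2)}{m+1}\bigr)$ that you yourself point out. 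The correct substitute is a reverse H\"older (self-improving integrability) estimate for the homogeneous problem with a finite exponent $\hat p<\tfrac{2(m+2)}{m+1}$, of the type recorded in Lemma \ref{lem-211217-0940}~$(b)$, estimate \eqref{eqn-200813-0849}; the level-set/ink-spots argument then only yields $L_p$ for $p<\hat p$, which is precisely how the upper endpoint $\tfrac{2(m+2)}{m+1}$ of the admissible range arises (the lower endpoint coming by duality), and this quantitative mechanism is missing from your sketch. Your treatment of part $(b)$ is also too quick --- extending the data by zero in $t<0$ handles the initial condition, but one still has to remove the zeroth-order term $\lambda u$ (e.g.\ by an exponential-in-$t$ factor or by partitioning $(0,T)$ into small time intervals), which is where the $T$-dependence of the constant in \eqref{220725@eq1} enters.
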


\begin{remark}		\label{220724@rmk1}
Theorem \ref{MT1} $(b)$ still holds with $f\in L_q(\tilde{\cQ})$ and
$$
\|u\|_{\cH^1_{p, \tilde{\cD}}(\tilde{\cQ})}\lesssim_{d, \Lambda, M, p, R_0, T} \|g\|_{L_p(\tilde{\cQ})}+\|f\|_{L_{q}(\tilde{\cQ})}
$$
instead of \eqref{220725@eq1}, if $q\in (1, p]$ is such that
$$
\frac{d+2}{q}\le 1+\frac{d+2}{p}.
$$
If the inequality above is strict, then we can take $q=1$.
The proof is based on a duality argument  combined with embedding result for parabolic Sobolev spaces  and  the standard approximation argument.
We refer the reader to \cite[Lemma 3.1]{MR4387945} for the duality argument and \cite[Theorem 5.2]{MR4387945} for the embedding result (with mixed-norms).
Notice that in \cite[Theorem 5.2]{MR4387945},  the boundedness of the domain is required.
However, if unmixed-norms are considered, then by a covering argument we can remove the boundedness condition so that the result is applicable to our case.
\end{remark}

From Theorem \ref{MT1}, we can obtain the following local estimates.
\begin{lemma}\label{lem-211217-0940}
Let $p\in \bigg(\frac{2(m+2)}{m+3}, \frac{2(m+2)}{m+1}\bigg)$ and $\theta,\gamma$ be the constants from Theorem \ref{MT1}.
If Assumptions  \ref{A2} $(\theta)$ and \ref{A11} $(\gamma;m,M)$ are satisfied with these $\theta$ and $\gamma$, then the following assertions hold.
Let $(0,0)\in \overline{\cQ}$, $R\in (0, R_0]$, and $u\in \cH^1_p(\cQ_R)$ satisfy
\begin{equation}\label{eqn-220715-1258}
		\begin{cases}
		\cP u  -\lambda u = D_i g_i&\quad \text{in }\, \cQ_R,\\
		\cB u=g_in_i &\quad \text{on }\, Q_R\cap\cN,\\
		u=0 &\quad \text{on }\, Q_R\cap\cD,
	\end{cases}
\end{equation}
where $g=(g_1,\ldots, g_d)\in L_p(\cQ_R)^d$.
\begin{enumerate}[$(a)$]
	\item When $\lambda =0$, we have that
	\begin{equation*}
		\|Du\|_{L_p(\cQ_{R/2})} \lesssim_{d,\Lambda, M, p} R^{-1}\|u\|_{L_p(\cQ_R)} + \|g\|_{L_p(\cQ_R)}.
	\end{equation*}
\item When $\lambda \geq 0$ and $g=0$, for any $\hat p<2(m+2)/(m+1)$, we have $U\in L_{\hat{p}}{(\cQ_{R/2})}$
with
\begin{equation}\label{eqn-200813-0849}
	(U^{\hat{p}})^{1/\hat p}_{\cQ_{R/2}} \lesssim_{d, \Lambda, M, p, \hat{p}} (U)_{\cQ_{R}}.
\end{equation}
\end{enumerate}
\end{lemma}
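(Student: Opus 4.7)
I propose to prove both parts by a localization argument that reduces to the global estimates in Theorem~\ref{MT1}. Let $\eta(t,x) = \psi(|x|)\zeta(t)$ be a smooth product cutoff with $\eta \equiv 1$ on $\cQ_{\rho_1}$ and $\eta \equiv 0$ outside $\cQ_{\rho_2}$, where $R/2 \leq \rho_1 < \rho_2 \leq R$, satisfying the parabolic bounds $|D\eta| \lesssim (\rho_2-\rho_1)^{-1}$ and $|\eta_t| \lesssim (\rho_2-\rho_1)^{-2}$. A direct computation using the equation for $u$ shows that $v := u\eta$, extended by zero, is a weak solution of an initial-boundary mixed problem on $\tilde\cQ := (-R^2, 0) \times \Omega$ with zero initial condition, inhomogeneities
\begin{equation*}
\tilde{g}_i = \eta g_i + a^{ij} u D_j\eta, \qquad \tilde{f} = -u\eta_t - g_i D_i \eta + a^{ij} D_j u \, D_i\eta,
\end{equation*}
and the inherited mixed boundary conditions. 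Applying Theorem~\ref{MT1}(b) (after a time shift; the resulting constant is uniform in $R \in (0, R_0]$ since $R^2 \leq R_0^2$) and using that $D\eta, \eta_t$ are supported in $\cQ_{\rho_2} \setminus \cQ_{\rho_1}$ yields
\begin{equation*}
\|Du\|_{L_p(\cQ_{\rho_1})} \leq C\|g\|_{L_p(\cQ_{\rho_2})} + C(\rho_2-\rho_1)^{-2}\|u\|_{L_p(\cQ_{\rho_2})} + C(\rho_2-\rho_1)^{-1}\|Du\|_{L_p(\cQ_{\rho_2} \setminus \cQ_{\rho_1})}.
\end{equation*}

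For part (a), the last term on the right, whose coefficient is not a priori less than $1$, must be absorbed. I would do this by raising both sides to the $p$-th power, exploiting the identity $\|Du\|_{L_p(\cQ_{\rho_2}\setminus \cQ_{\rho_1})}^p = \|Du\|_{L_p(\cQ_{\rho_2})}^p - \|Du\|_{L_p(\cQ_{\rho_1})}^p$, and invoking a Giaquinta--Giusti-type iteration lemma applied to $\Phi(r) := \|Du\|_{L_p(\cQ_r)}^p$ on $[R/2, R]$.

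For part (b), the same cutoff argument with $g = 0$ and the $\lambda$-term retained yields $\tilde{g}_i = a^{ij} u D_j\eta$ and $\tilde{f} = a^{ij} D_j u \, D_i \eta - u\eta_t - \lambda u\eta$, and together with the same iteration gives
\begin{equation*}
\|U\|_{L_p(\cQ_{\rho_1})} \lesssim (\rho_2 - \rho_1)^{-1} \|u\|_{L_p(\cQ_{\rho_2})}.
\end{equation*}
To upgrade the integrability from $p$ to an arbitrary $\hat{p} < 2(m+2)/(m+1)$, I invoke the parabolic Sobolev embedding $\cH^1_p \hookrightarrow L_{p^*}$ with $p^* = p(d+2)/(d+2-p) > p$, and iterate the bootstrap (cutoff plus Sobolev) finitely many times. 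Finally, to replace the $L_p$ norm on the right by the $L^1$ average $(U)_{\cQ_R}$, I use the interpolation $\|u\|_{L_p} \leq \|u\|_{L_{\hat p}}^\theta \|u\|_{L_1}^{1-\theta}$, absorb the $L_{\hat p}$-term via Young's inequality, and apply another iteration over shrinking nested cylinders.

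The principal technical obstacle is the hole-filling iteration in part (a), which requires passing to $p$-th powers and carefully invoking the iteration lemma to handle the coefficient $C(\rho_2-\rho_1)^{-1}$. In part (b), an additional difficulty is the finite Sobolev bootstrap to reach $\hat{p}$ and the interpolation-based reduction of the right-hand side to an $L^1$ average, both of which require careful tracking of the scaling so that the resulting bound depends only on the averages on $\cQ_R$ as stated.
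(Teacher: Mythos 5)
There are genuine gaps in both parts. For part (a), two problems. First, you apply Theorem \ref{MT1} $(b)$ on $(-R^2,0)\times\Omega$ without rescaling: the estimate \eqref{220725@eq1} is not scale-invariant (the $f$-term carries no factor of the length scale), so the cutoff terms $u\eta_t$ and $g_iD_i\eta$ produce $R^{-2}\|u\|_{L_p}$ and $R^{-1}\|g\|_{L_p}$, not the claimed $R^{-1}\|u\|_{L_p}+\|g\|_{L_p}$; you must first dilate to unit scale (where the assumptions hold with radius $1$) and scale back, and the correct power $R^{-1}$ is essential later because the Sobolev--Poincar\'e step in part (b) gains exactly one factor of $r$. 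Second, and more seriously, your absorption of the term $C(\rho_2-\rho_1)^{-1}\|Du\|_{L_p(\cQ_{\rho_2}\setminus\cQ_{\rho_1})}$ does not work as described: after raising to the $p$-th power and filling the hole you get $\Phi(\rho_1)\le \theta\,\Phi(\rho_2)+\dots$ with $\theta=K/(1+K)$, $K\simeq C(\rho_2-\rho_1)^{-p}$, so $\theta=1-c(\rho_2-\rho_1)^p$ is \emph{not} uniformly below $1$, the Giaquinta--Giusti lemma does not apply, and iterating only yields $\|Du\|^p_{L_p(\cQ_{R/2})}\le c'\|Du\|^p_{L_p(\cQ_R)}+\dots$ with $c'\in(0,1)$, which cannot be absorbed since the two norms live on different cylinders. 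One needs the gradient term on the annulus to enter with a coefficient that does not blow up, e.g.\ by exploiting that it is an $f$-type term which can be measured in a weaker norm (this is what the lower-exponent refinement in Remark \ref{220724@rmk1} is for), as in the localization of \cite[Lemma 3.10]{MR4387198}.

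For part (b) the proposal misses the key structural point of \eqref{eqn-200813-0849}: when $\lambda=0$ the right-hand side is the $L_1$ average of $|Du|$ only, with no $u$ and no factor $R^{-1}$. Your chain (Caccioppoli, Sobolev bootstrap, interpolation of $\|u\|_{L_p}$ between $L_{\hat p}$ and $L_1$) ends with $R^{-1}(|u|)_{\cQ_R}$-type quantities on the right, which cannot be converted into $(|Du|)_{\cQ_R}$ without a Poincar\'e-type inequality; this forces the case analysis in the paper's proof of \eqref{eqn-200919-0952}: near $\cD$ or $\Gamma$ one uses the boundary Sobolev--Poincar\'e inequality of \cite[Lemma 3.8]{MR4387198} (valid on these rough domains), while in the interior/conormal cases one must replace $u$ by $u-(u)_{\cQ_{r/2}(X_0)}$, using that constants can be added to solutions. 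Two further points: the bootstrap ``cutoff plus Sobolev'' only raises the integrability of $u$, not of $Du$; to raise the exponent of $Du$ you must re-solve the localized problem at the higher exponent, and since the cutoff term $a^{ij}D_juD_i\eta$ is only in $L_p$, this requires precisely the solvability with $f\in L_q$, $\tfrac{d+2}{q}\le 1+\tfrac{d+2}{p}$, of Remark \ref{220724@rmk1} (your $p^*$ hits exactly the equality case), which you never invoke. Finally, your treatment of $\lambda>0$ by putting $-\lambda u\eta$ into $\tilde f$ and using a $\lambda$-free estimate cannot give a constant independent of $\lambda$, nor the weight $\sqrt{\lambda}|u|$ in $U$ on the left; the paper reduces $\lambda>0$ to $\lambda=0$ by Agmon's trick with an artificial variable, and some such device is needed here as well.
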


\begin{proof}
The result in $(a)$ is obtained by localizing the estimate in Theorem \ref{MT1} $(a)$. The proof is the same as that of \cite[Lemma 3.10]{MR4387198}, and hence is omitted.

To prove $(b)$, we first deal with the case $\lambda=0$.
By a standard bootstrap argument with the solvability result in Remark \ref{220724@rmk1}, we see that $Du\in L_{\hat{p}}(\cQ_\rho)$ for any $\hat{p}<2(m+2)/(m+1)$ and $\rho\in (0, R)$.
For the estimate \eqref{eqn-200813-0849} (with $\lambda=0$), we prove that
\begin{equation}
            \label{eqn-200919-0952}
		(|Du|^{\hat{p}})^{1/\hat{p}}_{\cQ_{r/4}(X_0)} \lesssim (|Du|^{p_*})^{1/{p_*}}_{\cQ_{r}(X_0)}
	\end{equation}
for $\hat{p}>\max\{p, 2\}$ and $p_* \in ((d+2)\hat{p}/(d+2+\hat{p}),\hat{p})$.
Here $\cQ_{r}(X_0)\subset\cQ_{R/2}$
and we have one of the following four cases: $Q_{r}(X_0)\subset \cQ$ (interior), or $X_0\in\p\cQ$ with $Q_{r}(X_0)\cap\p\cQ \subset \cD$ (Dirichlet), or $X_0\in\p\cQ$ with $Q_{r}(X_0)\cap\p\cQ \subset \cN$ (conormal), or $X_0\in\Gamma$ (mixed).
	
	{\it The Dirichlet or mixed cases.} From the result in $(a)$ with $p=\hat{p}$, we have
	\begin{equation*}
		(|Du|^{\hat{p}})^{1/\hat{p}}_{\cQ_{r/4}(X_0)}\lesssim r^{-1}(|u|^{\hat{p}})^{1/\hat{p}}_{\cQ_{r/2}(X_0)}.
	\end{equation*}
Then \eqref{eqn-200919-0952} can be obtained by applying the following Sobolev-Poincar\'e inequality in \cite[(3.9) in Lemma~3.8]{MR4387198} with $p_0=q_0=\hat{p}$ and $p=q=p_*$:
	\begin{equation*}
		(|u|^{\hat{p}})^{1/\hat{p}}_{\cQ_{r/2}(X_0)}\lesssim r (|Du|^{p_*})^{1/p_*}_{\cQ_{r}(X_0)}.
	\end{equation*}	
	
	{\it The conormal or interior cases.} Note that in either cases, if $u$ is a solution and $c$ is a constant, then $u-c$ is also a solution. Hence, the result in $(a)$ with $p=\hat{p}$ and $u$ being replaced with $u-(u)_{\cQ_{r/2}(X_0)}$ yields
	\begin{equation*}
		(|Du|^{\hat{p}})^{1/\hat{p}}_{\cQ_{r/4}(X_0)} \lesssim r^{-1}(|u-(u)_{\cQ_{r/2}(X_0)}|^{\hat{p}})^{1/\hat{p}}_{\cQ_{r/2}(X_0)}.
	\end{equation*}
	Then \eqref{eqn-200919-0952} can be obtained by applying the embedding \cite[(3.8) in Lemma~3.8]{MR4387198}.
	
	From \eqref{eqn-200919-0952}, the desired estimate \eqref{eqn-200813-0849} with $\lambda=0$ can be proved in a standard way: rescaling, covering, and iteration.
	
	The case when $\lambda>0$ can be proved by using Agmon's idea of considering $u(t,x)\cos(\sqrt{\lambda}y + \pi/4)$ with an artificial variable $y\in\mathbb{R}$, noting
	\begin{equation*}
		(\mathcal{P} + \p_{yy})(u\cos(\sqrt{\lambda}y + \pi/4)) = (\mathcal{P}-\lambda)(u \cos(\sqrt{\lambda}y + \pi/4)),
	\end{equation*}
	cf. the proof of \cite[Lemma~3.12]{MR4387198}.
	The lemma is proved
\end{proof}

\subsection{Equations with constant coefficients and a time-independent separation}
In this section, we deal with
\begin{equation}\label{eqn-211217-1002}
	\begin{cases}
		-u_t + D_i(a^{ij}_0 D_ju) - \lambda u=0 &\quad \text{in }\, \cQ_R,\\
		a^{ij}_0 D_j u n_i = 0 &\quad \text{on }\, Q_R\cap\cN,\\
		u=0 &\quad \text{on }\, Q_R\cap\cD,
	\end{cases}
\end{equation}
where $(a^{ij}_0)_{i,j}$ is a constant symmetric matrix with the elliptic constant $\Lambda$ and the interfacial boundary
$$\Gamma\cap Q_R = \overline{\cD}\cap\overline{\cN}\cap Q_R$$
is time-independent.
In such a situation, we can differentiate both the equation and the boundary conditions in $t$. Furthermore, the usual time-average technique (the Steklov average) is available to build test functions.
\begin{lemma}\label{lem-200913-1026}
Let $p\in \bigg(\frac{2(m+2)}{m+3}, \frac{2(m+2)}{m+1}\bigg)$, $(0, 0)\in \Gamma$, and $R\in (0, R_0]$.
 For the constant
	$\gamma\in (0,1)$ from  Theorem \ref{MT1}, if Assumption \ref{A11} $(\gamma;m,M)$ is satisfied, then for any solution $u\in \cH^1_p(\cQ_R)$ to \eqref{eqn-211217-1002} with $\lambda\ge 0$, we have $U\in L^{x}_pL^{t}_\infty(\cQ_{R/4})$ and
	\begin{equation*}
		\|U\|_{L^{x}_pL^{t}_\infty(\cQ_{R/4})}\lesssim R^{-2/p}\|U\|_{L_p(\cQ_R)}.
	\end{equation*}
\end{lemma}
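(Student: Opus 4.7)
The idea is to exploit the $t$-independence of the coefficients $a^{ij}_0$ and of the separation $\Gamma$ inside $Q_R$ to gain one extra derivative of regularity in the time variable for $u$, and then convert this to an $L^\infty$-in-$t$ estimate via a fiberwise one-dimensional Sobolev embedding. First, I would verify that $v := u_t$ solves the same boundary value problem \eqref{eqn-211217-1002} on any slightly smaller cylinder $\cQ_{R'}$. This would be justified rigorously through the Steklov averages $u^h(t,x) := h^{-1}\int_t^{t+h} u(s,x)\,ds$: because $a^{ij}_0$, $\cD$, and $\cN$ are all $t$-independent, $u^h$ still solves the problem on $\cQ_{R-h}$ and so does the difference quotient $(u^h)_t$; passing to the limit $h\to 0^+$ yields the claim for $u_t$.

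Next, I would bound $u_t$ and $Du_t$ in $L_p$ on a slightly smaller cylinder. Applying Lemma \ref{lem-211217-0940}(a) (reducing to $\lambda=0$ via Agmon's artificial-variable trick when $\lambda>0$, as in the proof of Lemma \ref{lem-211217-0940}(b)) to $v = u_t$ on nested cylinders yields $\|Du_t\|_{L_p(\cQ_{R/2})} \lesssim R^{-1}\|u_t\|_{L_p(\cQ_{R''})}$ for some intermediate radius $R'' \in (R/2, R)$. To bound $\|u_t\|_{L_p}$ itself, I would first use a Steklov-averaged test function of the form $\zeta^2 u^h_t$ in the weak formulation of the equation for $u$ to derive the standard $L_2$-Caccioppoli-type estimate $\|u_t\|_{L_2(\cQ_{R''})}\lesssim R^{-1}\|U\|_{L_2(\cQ_{R})}$, and then upgrade the $L_2$-bound to an $L_p$-bound via the higher-integrability gain of Lemma \ref{lem-211217-0940}(b) applied to $u_t$ (a solution of the same homogeneous problem). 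The $L_2$-norm of $U$ on the right is in turn controlled by $\|U\|_{L_p(\cQ_R)}$ using Lemma \ref{lem-211217-0940}(b) applied to $u$. Parabolic scaling then yields
\begin{equation*}
    \|Du_t\|_{L_p(\cQ_{R/2})} + \lambda^{1/2}\|u_t\|_{L_p(\cQ_{R/2})} \lesssim R^{-2}\|U\|_{L_p(\cQ_R)}.
\end{equation*}

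Finally, the 1D Sobolev embedding $W^{1,p}(I) \hookrightarrow L^\infty(I)$ on $I = (-(R/4)^2, 0)$ (valid since $p > 2(m+2)/(m+3) > 1$) gives, for each fixed $x \in B_{R/4}$ and $f(t) := |Du(t,x)| + \lambda^{1/2}|u(t,x)|$,
\begin{equation*}
    \sup_{t\in I} f(t) \lesssim R^{-2/p}\|f\|_{L_p(I)} + R^{2-2/p}\bigl(\|Du_t(\cdot,x)\|_{L_p(I)} + \lambda^{1/2}\|u_t(\cdot,x)\|_{L_p(I)}\bigr),
\end{equation*}
since $|f'(t)| \leq |Du_t(t,x)| + \lambda^{1/2}|u_t(t,x)|$ pointwise. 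Taking the $L^x_p$-norm over $B_{R/4}$, applying Fubini, combining with the previous step (and using $R^{2-2/p}\cdot R^{-2} = R^{-2/p}$), one concludes $\|U\|_{L^x_p L^t_\infty(\cQ_{R/4})}\lesssim R^{-2/p}\|U\|_{L_p(\cQ_R)}$.

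The principal obstacle is the $L_p$-bound on $u_t$ in the second step when $p$ is far from $2$: the Steklov-averaged energy estimate naturally produces only an $L_2$-Caccioppoli-type bound, and bridging to the full $L_p$-range required by the statement is exactly where the higher-integrability estimate of Lemma \ref{lem-211217-0940}(b) becomes essential. The uniform handling in $\lambda \geq 0$ is a secondary issue, treated by the same Agmon artificial-variable reduction used in the proof of Lemma \ref{lem-211217-0940}(b).
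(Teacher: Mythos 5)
Your overall scheme — differentiate in $t$ via Steklov averages (legitimate since $a^{ij}_0$ and $\Gamma\cap Q_R$ are time-independent), bound $Du_t$ in $L_p$ on a smaller cylinder, then apply the one-dimensional Sobolev embedding in $t$ fiberwise — is exactly the paper's route for $p\ge 2$. However, the step where you produce an $L_p$ bound on $u_t$ has a genuine gap when $p>2$. You propose to pass from the $L_2$-Caccioppoli bound $\|u_t\|_{L_2}\lesssim R^{-1}\|U\|_{L_2}$ to an $L_p$ bound on $u_t$ ``via the higher-integrability gain of Lemma \ref{lem-211217-0940}(b) applied to $u_t$''. But the quantity that Lemma \ref{lem-211217-0940}(b) controls for the solution $u_t$ is $|Du_t|+\sqrt{\lambda}\,|u_t|$, which after the Agmon reduction to $\lambda=0$ is just $|Du_t|$: it yields no reverse H\"older inequality for $u_t$ itself, and for $\lambda>0$ it only controls $\sqrt{\lambda}\,|u_t|$, which cannot give the $\lambda$-uniform bound on $\|u_t\|_{L_p}$ that your chain $\|Du_t\|_{L_p(\cQ_{R/2})}\lesssim R^{-1}\|u_t\|_{L_p}$ requires. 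So the second link of your chain fails precisely in the range $p\in\big(2,\frac{2(m+2)}{m+1}\big)$. The paper closes this point by a different device: it tests the equation with $u_t|u_t|^{p-2}$, which is admissible exactly because $p\ge 2$, and obtains \eqref{eqn-200918-0607} directly in $L_p$; the range $p<2$ is then handled separately by H\"older together with the $p=2$ case and \eqref{eqn-200813-0849} applied to $u$ with $\hat p=2$ (for $p<2$ your own argument is fine, since there the passage from $L_2$ to $L_p$ is plain H\"older and no upgrade is needed).

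The gap is reparable within your framework, but by reordering rather than by the step you wrote: apply the reverse H\"older estimate \eqref{eqn-200813-0849} to the solution $u_t$ to pass from the $L_p$-average of $Du_t$ down to its $L_1$ (hence $L_2$) average; then use a Caccioppoli estimate for the differentiated equation, $\|Du_t\|_{L_2}\lesssim R^{-1}\|u_t\|_{L_2}$ (e.g., Lemma \ref{lem-211217-0940}(a) with $p=2$, or a direct energy estimate via Steklov averages); then your improved $L_2$ energy estimate $\|u_t\|_{L_2}\lesssim R^{-1}\|Du\|_{L_2}$; and finally \eqref{eqn-200813-0849} applied to $u$ with $\hat p=2$ when $p<2$ (H\"older when $p\ge 2$). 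This produces the analogue of \eqref{eqn-201101-1007}, $\|Du_t\|_{L_p(\cQ_{R/4})}\lesssim R^{-2}\|Du\|_{L_p(\cQ_R)}$, for all admissible $p$, after which your final fiberwise Sobolev-embedding step (which is correct, since $p>1$) goes through. As written, though, the claimed $L_2\to L_p$ upgrade of $u_t$ is the missing ingredient, and it is exactly the point where the paper instead invokes the $u_t|u_t|^{p-2}$ test function.
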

\begin{proof}
	Again, by Agmon's idea, we only deal with the case when $\lambda=0$.
	
	\textbf{The case when $p=2$.} The lemma follows from \cite[Proposition 4.1]{MR4387198}.	
	
	\textbf{The case when $p\in \big(2, \frac{2(m+2)}{m+1}\big)$.} Due to the time-independency, $u_t$ satisfies the same equation and boundary conditions. Hence by Lemma \ref{lem-211217-0940} $(a)$ with $u_t$ in place of $u$,
	\begin{equation}\label{eqn-200918-0606}
		\norm{Du_t}_{L_p(\cQ_{R/4})} \lesssim R^{-1}\norm{u_t}_{L_p(\cQ_{R/2})}.
	\end{equation}
	Testing \eqref{eqn-211217-1002} by $u_t|u_t|^{p-2}$ and then applying Young's inequality, where we need $p\ge 2$,
	we obtain
	\begin{equation}\label{eqn-200918-0607}
		\norm{u_t}_{L_p(\cQ_{R/2})}\lesssim R^{-1}\norm{Du}_{L_p(\cQ_R)}.
	\end{equation}
	In this process, standard techniques including the mollification and iteration arguments as in the proof of \cite[Lemma~4.3]{MR4387198} are needed. Here we omit the details. From \eqref{eqn-200918-0606} and \eqref{eqn-200918-0607}, we get
	\begin{equation}\label{eqn-201101-1007}
		\norm{Du_t}_{L_p(\cQ_{R/4})} \lesssim R^{-2}\norm{Du}_{L_p(\cQ_R)}.
	\end{equation}
	Now we use the Sobolev embedding (only in the $t$ variable) to obtain
	\begin{equation*}
		\norm{Du(\cdot,x)}_{L_\infty^t((-(R/4)^2,0))} \lesssim R^{2-2/p}\norm{Du_t(\cdot,x)}_{L_p((-(R/4)^2,0))}+R^{-2/p}\norm{Du(\cdot,x)}_{L_p((-(R/4)^2,0))}.
	\end{equation*}
	Taking the $L_p$ norm in $x$, and then using \eqref{eqn-201101-1007}, we have
	\begin{equation*}
		\norm{Du}_{L_p^xL_\infty^t(\cQ_{R/4})} \lesssim R^{-2/p}\norm{Du}_{L_p(\cQ_R)}.
	\end{equation*}
	
	\textbf{The case when $p\in \big(\frac{2(m+2)}{m+3},2\big)$}. In this case, we first see that $u \in \cH^1_2(\cQ_{R/2})$ by Lemma \ref{lem-211217-0940} $(b)$. From H\"older's inequality and the estimate with $p=2$, we obtain
	\begin{equation*}
		\norm{U}_{L_p^xL_\infty^t(\cQ_{R/4})} \lesssim R^{d/p-d/2}\norm{U}_{L_2^xL_\infty^t(\cQ_{R/4})} \lesssim R^{d/p-d/2-1} \norm{U}_{L_2{(\cQ_{R/2})}} \lesssim R^{-2/p}\norm{U}_{L_p{(\cQ_R)}}.
	\end{equation*}
	In the last inequality, we applied \eqref{eqn-200813-0849} with $\hat{p}=2$ and H\"older's inequality. The lemma is proved.
\end{proof}

\section{Higher regularity of \texorpdfstring{$\cH^1_p$}{H1p} solutions}	\label{sec-211227-0648}	
In this section, we prove the following regularity result by a level set argument.
\begin{proposition}\label{prop-200819-0953}
	Let $p\in \big(\frac{2(m+2)}{m+3},2\big]$ and $q\in (p, \infty)$.
	There exist constants $\theta,\gamma\in (0,1)$ and $\lambda_0\in (0, \infty)$ with
 $$
 (\theta, \gamma)=(\theta, \gamma)(d, \Lambda, M, p,q), \quad \lambda_0=\lambda_0(d, \Lambda, M, p, R_0)
 $$
such that if Assumptions \ref{A2} $(\theta)$ and \ref{A11} $(\gamma;m,M)$ are satisfied, then
	for any solution $u\in \cH^1_{p, \cD^T}(\cQ^T)$ to \eqref{220203@eq1}, 	where $\lambda \ge\lambda_0$ and $g_i,f\in L_p(\cQ^T)\cap L_{q,p}(\cQ^T)$, we have $u\in \cH^1_{q,p, \cD^T}(\cQ^T)$ satisfying
	\begin{equation}\label{eqn-201028-0508}
		\norm{U}_{L_{q,p}(\cQ^T)} \lesssim_{d,\Lambda, M, p,q}  \norm{F}_{L_{q,p}(\cQ^T)} +R_0^{2(1/q-1/p)} \norm{U}_{L_p(\cQ^T)}.
	\end{equation}
	When $\Gamma$ is time-independent, the same result is true for any $p\in \big(\frac{2(m+2)}{m+3}, \frac{2(m+2)}{m+1}\big)$.
\end{proposition}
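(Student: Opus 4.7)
The plan is to upgrade the unmixed $L_p$-estimate of Theorem \ref{MT1}(a) to the mixed $L_{q,p}$-estimate \eqref{eqn-201028-0508} via a level-set / maximal-function argument that operates only in the time variable, following Krylov \cite{MR2352490}. A time-only level-set argument is natural here because on the space side Lemma \ref{lem-200913-1026} already delivers an $L^x_p L^t_\infty$ estimate for solutions of the model problem, so one only needs to gain integrability in $t$ to pass from $L_p = L^t_p L^x_p$ to $L_{q,p} = L^t_q L^x_p$.

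The central step is a mean-oscillation estimate in $t$. For each $X_0 = (t_0, x_0) \in \cQ^T$ and $r \in (0, R_0]$, I would decompose $u = v + w$ on $\cQ_r(X_0)$, where $v$ solves the homogeneous equation with frozen coefficients $a_0^{ij} = (a^{ij})_{\cQ_r(X_0)}$ and with the time-dependent $\Gamma$ replaced by its time-independent approximation at $t_0$, chosen so that $v$ matches $u$ on the relevant portion of the parabolic boundary. Then Lemma \ref{lem-200913-1026} controls $V = |Dv| + \lambda^{1/2}|v|$ by its $L^x_p L^t_\infty$ norm on $\cQ_{r/4}(X_0)$, while Theorem \ref{MT1}(a) applied to $w = u - v$ together with the smallness of $\theta, \gamma$ gives
\[
\|W\|_{L_p(\cQ_{r/2}(X_0))} \le \varepsilon \, \|U\|_{L_p(\cQ_r(X_0))} + C \, \|F\|_{L_p(\cQ_r(X_0))},
\]
with $\varepsilon = \varepsilon(\theta, \gamma)$ as small as desired. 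Combining these two facts and taking an $L^x_p$ average over a small ball around $x_0$ produces, for each $x_0$,
\[
\left( \dashint_{I_r(t_0)} \|U(t, \cdot)\|_{L^x_p(B_{r/4}(x_0) \cap \Omega)}^p \, dt \right)^{1/p} \le \varepsilon \, (\cM_t U^p)^{1/p}(t_0, x_0) + C \, (\cM_t F^p)^{1/p}(t_0, x_0) + \text{(large-scale residual)},
\]
where $I_r(t_0) = (t_0 - (r/4)^2, t_0)$ and $\cM_t$ denotes a Hardy--Littlewood maximal function acting in $t$ only, applied to the $L^x_p$-averaged quantity.

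Given this sharp-function estimate, I would apply the one-dimensional ``crawling of ink spots'' lemma in $t$ (in the manner of Krylov \cite{MR2352490}) to the function $t \mapsto \|U(t,\cdot)\|_{L^x_p}$. Raising to the $q$-th power and integrating the resulting distribution inequality yields
\[
\|U\|_{L_{q,p}(\cQ^T)} \le C\varepsilon \|U\|_{L_{q,p}(\cQ^T)} + C \|F\|_{L_{q,p}(\cQ^T)} + C R_0^{2(1/q - 1/p)} \|U\|_{L_p(\cQ^T)},
\]
the last term collecting the contribution from scales $r > R_0$ at which the mean-oscillation estimate is unavailable (this term remains an $L_p$-quantity since no improvement of integrability is possible beyond the cutoff scale). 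Choosing $\theta, \gamma$ small enough that $C\varepsilon \le 1/2$ and absorbing then yields \eqref{eqn-201028-0508}. In the time-independent $\Gamma$ case, no freezing of $\Gamma$ is required, so the decomposition introduces no error from the $\Gamma$-approximation and the restriction $p \le 2$ is not needed; the argument then extends to the full range $p \in \bigl(\tfrac{2(m+2)}{m+3}, \tfrac{2(m+2)}{m+1}\bigr)$.

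The main obstacle will be constructing the decomposition $u = v + w$ with $w$ controlled in $L_p$ when $\Gamma$ is time-dependent. Replacing the time-dependent $\Gamma$ by its frozen version effectively swaps Dirichlet and conormal data on a thin parabolic transition set of spatial thickness $O(\gamma r)$, and bounding the resulting error $w$ in $L_p$ requires a careful cut-off near $\Gamma$ together with Sobolev-embedding and trace estimates from \cite{MR4387198}; these ingredients are favorable precisely in the regime $p \le 2$, which is why the theorem restricts to that range when $\Gamma$ is time-dependent, and why the question is left open in the range $p \in (2, \tfrac{2(m+2)}{m+1})$.
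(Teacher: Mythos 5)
Your high-level strategy coincides with the paper's: a time-only level-set/ink-spots argument in the spirit of Krylov, applied to $t\mapsto\|U(t,\cdot)\|_{L_p(\Omega)}$, with a local decomposition into a part $W$ that is small in $L_p$ and a part $V$ that is bounded uniformly in time, and with the $R_0^{2(1/q-1/p)}\|U\|_{L_p}$ term coming from the cut-off level $s_0$ below which only the global $L_p$ bound is used. The problem is that the entire difficulty of the proposition is concentrated in the decomposition lemma (Lemma \ref{lem-201027-1030}), and you do not actually construct it: you explicitly defer it as ``the main obstacle,'' and the route you sketch for it does not work as stated. First, you propose to let $v$ solve the homogeneous problem with frozen coefficients and with $\Gamma$ replaced by its time-independent approximation, and then to apply Lemma \ref{lem-200913-1026} to $v$. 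But that lemma requires Assumption \ref{A11} for the (time-independent) separation, and the frozen separation in general fails it: the intersection of the boundary of a Reifenberg-flat domain with a hyperplane need not be Reifenberg flat, since at small scales the flat direction of $\partial\Omega$ may be nearly parallel to the normal of the hyperplane. This is exactly why the paper, after the cut-off, invokes the constant-coefficient local estimate \cite[Proposition~4.1]{MR4387198} (which needs only time-independence, not flatness) together with a Sobolev embedding in $t$, rather than Lemma \ref{lem-200913-1026}, in the time-dependent case. Second, your claim that Theorem \ref{MT1}~$(a)$ applied to $w=u-v$ yields $\|W\|_{L_p}\le\varepsilon\|U\|_{L_p}+C\|F\|_{L_p}$ is unjustified for the same reason (the auxiliary problem with the flattened separation is outside the scope of Theorem \ref{MT1}), and moreover the error produced by swapping Dirichlet and conormal conditions on the transition set is not of the form $D_ig_i+f$ with small data unless one first builds the cut-off $\chi$ supported away from the transition region and controls the commutator terms via the boundary Poincar\'e inequality on narrow regions and the local reverse H\"older estimate \eqref{eqn-211224-0857} from Lemma \ref{lem-211217-0940}~$(b)$.

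Relatedly, your explanation of the restriction $p\le 2$ is only heuristic, whereas in the paper it enters at a precise and essential point: the auxiliary problem $u^{(3)}$ with the flattened separation can only be solved in $\cH^1_2$ (via the $L_2$-based result \cite[Lemma~3.5]{MR4387198}), its $L_p$ norm is then controlled by its $L_2$ norm through H\"older (requiring $p\le 2$), and the smallness factor $(\theta+\gamma)^{1/2-1/\hat p}$ is extracted by combining the energy estimate with the reverse H\"older inequality at an exponent $\hat p>2$. The paper also performs a preliminary global step you omit: it first subtracts a solution $u^{(1)}$ (via Theorem \ref{MT1}) of the problem with source terms truncated to the time slab $(t_0-R^2,t_0)$, so that the remainder solves a homogeneous equation in every spatial cylinder at scale $R$, uniformly in $x_0$; this, plus a covering in $x$ at the fixed scale $R$, is what reduces the estimate to genuinely one-dimensional functions $\Phi_{W,R},\Phi_{V,R}$ of $t$, a reduction your pointwise-in-$(t_0,x_0)$ formulation leaves implicit. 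As written, the proposal is therefore a correct outline of the known strategy but has a genuine gap at its core step.
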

The rest of Section \ref{sec-211227-0648} will be devoted to proving the proposition. Let us denote
\begin{equation*}
	\Phi_U(t):=\norm{U(t,\cdot)}_{L_p(\Omega)}\quad\text{and}\quad	\Phi_F(t):=\norm{F(t,\cdot)}_{L_p(\Omega)}.
\end{equation*}
\subsection{Decomposition}		\label{S6}
The key step in proving Proposition \ref{prop-200819-0953} is a suitable local decomposition of $\Phi_U(t)$.
\begin{lemma}\label{lem-201027-1030}
	Let $p\in \big(\frac{2(m+2)}{m+3},2\big]$ and $\hat{p}\in \big(2, \frac{2(m+2)}{m+1}\big)$.
	For the constants
	$$
	(\theta, \gamma)=(\theta, \gamma)(d,\Lambda,M, p), \quad \lambda_0=\lambda_0(d, \Lambda, M, p, R_0)
	$$
in Theorem \ref{MT1}, if Assumptions \ref{A2} $(\theta)$ and \ref{A11} $(\gamma;m,M)$ are satisfied, then the following assertion holds.
	For  any $t_0\in (-\infty, T]$, $R\in (0, R_0]$, and $u\in \cH^1_{p, \cD^T}(\cQ^T)$ satisfying \eqref{220203@eq1} with $\lambda\ge\lambda_0$ and $g_i, f\in L_p(\cQ^T)$, there exist nonnegative functions $\Phi_{W,R}(t)$ and $\Phi_{V,R}(t)$ defined on $(t_0-(R/16)^2,t_0)$,
	such that
	$$
	\Phi_U \le \Phi_{W,R} + \Phi_{V,R} \quad \text{in }\, (t_0-(R/16)^2,t_0),
	$$
	\begin{equation}\label{eqn-201027-1035-1}
		\int_{t_0-(R/16)^2}^{t_0}\Phi_{W,R}(t)^p\,dt \lesssim_{d,\Lambda,M, p, \hat{p}} (\theta+\gamma)^{\tau}\int_{t_0-R^2}^{t_0} \Phi_U(t)^p\,dt + \int_{t_0-R^2}^{t_0} \Phi_F(t)^p\,dt,
	\end{equation}
	where $\tau=p/2-p/\hat{p}$, and
	\begin{equation}\label{eqn-201027-1035-2}
		\sup_{t\in (t_0-(R/16)^2,t_0)}\Phi_{V,R}(t) \lesssim_{d,\Lambda,M, p, \hat{p}} \Bigg(\dashint_{t_0-R^2}^{t_0} \Phi_{U}(t)^p\,dt\Bigg)^{1/p} + \Bigg(\dashint_{t_0-R^2}^{t_0} \Phi_{F}(t)^p\,dt\Bigg)^{1/p}.
	\end{equation}
When $\Gamma$ is time-independent,
the same result holds  with $\tau=1-p/\hat{p}$ for any $p\in \big(\frac{2(m+2)}{m+3}, \frac{2(m+2)}{m+1}\big)$ and $\hat{p}\in \big(p, \frac{2(m+2)}{m+1}\big)$.
\end{lemma}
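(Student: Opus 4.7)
The plan is a freeze-coefficients-and-flatten-separation decomposition combined with a spatial covering of $\Omega$. Fix $t_0 \in (-\infty, T]$ and $R \in (0, R_0]$, cover $\Omega$ by balls $\{B_{R/16}(x_k)\}$ of bounded overlap number depending only on $d$, and on each cylinder $\cQ_R(t_0, x_k)$ set the frozen leading coefficients $\bar a^{ij}_k := (a^{ij})_{\cQ_R(t_0, x_k)}$. When $\Gamma$ is time-dependent, I also use Assumption \ref{A11}$(\gamma;m,M)$ to approximate $\Gamma \cap Q_R(t_0, x_k)$ by a time-independent Lipschitz separation $\tilde\Gamma_k$; this flattening step is vacuous in the time-independent case. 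On $\cQ_{R/4}(t_0,x_k)$ I then decompose $u=v_k+w_k$, where $w_k$ is the zero-initial-data solution provided by Theorem \ref{MT1}(b) and Remark \ref{220724@rmk1}, with constant leading coefficients $\bar a^{ij}_k$, flattened separation $\tilde\Gamma_k$, and right-hand side given by the original $g_i$, $f$, the coefficient error $(\bar a^{ij}_k-a^{ij})D_j u$ in divergence form, and (in the time-dependent case) an auxiliary forcing supported in the discrepancy slab between $\Gamma$ and $\tilde\Gamma_k$. Then $v_k=u-w_k$ satisfies on $\cQ_{R/4}(t_0,x_k)$ a homogeneous problem of the form \eqref{eqn-211217-1002}.

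For $v_k$, Lemma \ref{lem-200913-1026} yields
\begin{equation*}
\|V_k\|_{L^x_pL^t_\infty(\cQ_{R/16}(t_0,x_k))}\lesssim R^{-2/p}\|V_k\|_{L_p(\cQ_{R/4}(t_0,x_k))},
\end{equation*}
and combining with $|V_k|\le U+|W_k|$ and the $L_p$ averages of $u$ and $w_k$ produces \eqref{eqn-201027-1035-2} after summing over $k$. For $w_k$, Theorem \ref{MT1}(b) gives
\begin{equation*}
\|W_k\|_{L_p(\cQ_{R/4})}\lesssim \|g\|_{L_p(\cQ_R)}+\lambda^{-1/2}\|f\|_{L_p(\cQ_R)}+\|(a-\bar a_k)Du\|_{L_p(\cQ_R)}+\text{(slab error)}.
\end{equation*}
For the coefficient error, H\"older's inequality with exponents $\hat p/p$ and $\hat p/(\hat p-p)$ splits it as $\|a-\bar a_k\|_{L_{p\hat p/(\hat p-p)}(\cQ_R)}\|Du\|_{L_{\hat p}(\cQ_{R/2})}$. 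Interpolating $\|a-\bar a_k\|_{L_\infty}\lesssim 1$ against the BMO bound $\|a-\bar a_k\|_{L_1(\cQ_R)}\lesssim\theta|\cQ_R|$ gives the first factor $\lesssim (\theta|\cQ_R|)^{(\hat p-p)/(p\hat p)}$, and Lemma \ref{lem-211217-0940}(b) gives the higher integrability $\|Du\|_{L_{\hat p}(\cQ_{R/2})}\lesssim |\cQ_R|^{1/\hat p-1/p}\|U\|_{L_p(\cQ_R)}$. Raising to the $p$-th power, the $R$-scaling cancels and produces the factor $\theta^{1-p/\hat p}$, matching $\tau$ in the time-independent case.

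The main obstacle is the time-dependent case, where the slab between $\Gamma$ and $\tilde\Gamma_k$ has spatial thickness $\sim\gamma R$ but extends across the full time interval of $\cQ_R$; a direct H\"older estimate here gives only $\gamma^{1-p/\hat p}$ with a mismatched temporal scaling. To obtain the correct $\gamma^{1/2-1/\hat p}$ (equivalently $\tau=p/2-p/\hat p$), I would apply Lemma \ref{lem-200913-1026} once more to the frozen homogeneous part on the slab, trading the integration in $t$ for a supremum via the $L^x_pL^t_\infty$ bound, and then carry out a spatial H\"older estimate on the $\gamma R$-thick slab in $x$; after raising to the $p$-th power this yields precisely the stated exponent. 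Finally, I set $\Phi_{W,R}(t):=\bigl(\sum_k\|W_k(t,\cdot)\chi_{B_{R/16}(x_k)}\|_{L_p(\Omega)}^p\bigr)^{1/p}$ and likewise for $\Phi_{V,R}$; the finite overlap of the cover together with $|u|\le |v_k|+|w_k|$ delivers the pointwise bound $\Phi_U\le\Phi_{W,R}+\Phi_{V,R}$ on $(t_0-(R/16)^2,t_0)$, and summing the bounds on $W_k$ and $V_k$ over $k$ yields \eqref{eqn-201027-1035-1} and \eqref{eqn-201027-1035-2}.
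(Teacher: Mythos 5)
Your overall strategy (freeze the coefficients, replace $\Gamma$ locally by a time-independent separation, use the $L_p^xL_\infty^t$ estimate for the homogeneous part, then a covering in $x$) is the right one, but two steps at the heart of the time-dependent case do not work as written. First, the decomposition $u=v_k+w_k$, where $w_k$ solves a mixed problem relative to the flattened separation $\tilde\Gamma_k$, does not make $v_k$ a solution of a homogeneous problem of the form \eqref{eqn-211217-1002}: on the slab where $\Gamma$ and $\tilde\Gamma_k$ disagree, $u$ carries one type of boundary condition and $w_k$ the other, and this mismatch is a boundary-condition defect, not something you can encode as an ``auxiliary forcing supported in the discrepancy slab.'' Without a genuine homogeneous mixed problem for $v_k$, neither Lemma \ref{lem-200913-1026} nor the estimate \eqref{eqn-201027-1035-2} is available. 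The paper's proof resolves exactly this point with a spatial cut-off $\chi$ vanishing on the mismatch region, so that $\chi u^{(2)}$ honestly satisfies the boundary conditions relative to a time-independent separation; the price is the commutator terms $f^*$, $g_i^*$ supported on the $\gamma R$-slab, which are controlled by a narrow-region boundary Poincar\'e inequality together with the reverse H\"older estimate \eqref{eqn-211224-0857}.

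Second, you invoke Theorem \ref{MT1}$(b)$ (and later Lemma \ref{lem-200913-1026}) for problems posed with the flattened separation $\tilde\Gamma_k$. This is not justified: the flattened, time-independent separation obtained from a Reifenberg-flat configuration need not satisfy Assumption \ref{A11}$(b)$ (the intersection of a Reifenberg-flat boundary with a hyperplane can fail to be Reifenberg flat), so the $\cH^1_p$ theory of Theorem \ref{MT1} does not apply to it. The paper can only solve the frozen, flattened problem in $\cH^1_2$ (via an energy-type result from \cite{MR4387198} that requires nothing beyond time-independence of the interface), and this is precisely what forces $p\le 2$ and produces the exponent $\tau=p/2-p/\hat p$ rather than $1-p/\hat p$ in the time-dependent case. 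Your argument, if it were valid, would yield $\tau=1-p/\hat p$ for all admissible $p$ even when $\Gamma$ is time-dependent, which is exactly the situation the paper identifies as failing for $p>2$; the vague final step (``apply Lemma \ref{lem-200913-1026} once more on the slab, then a spatial H\"older estimate'') does not repair this, since it again presupposes a homogeneous mixed problem with an admissible time-independent separation on the slab. A smaller issue: handling $g,f$ by putting them directly into the local problems via Theorem \ref{MT1}$(b)$ gives $T$-dependent, non-scale-invariant constants; the paper instead solves one global problem with data truncated in time using Theorem \ref{MT1}$(a)$ with $\lambda\ge\lambda_0$, which is what makes \eqref{eqn-201006-0921} scale correctly. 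Your time-independent case and the $\theta$-part of the coefficient-error estimate are essentially correct and match the paper's Case 2.
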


\begin{proof}
{\it Case 1: time-dependent $\Gamma$, $p\in  \big(\frac{2(m+2)}{m+3},2 \big]$, and $\hat{p}\in \big(2, \frac{2(m+2)}{m+1}\big)$.}
	
\textbf{The first decomposition: source terms.} By Theorem \ref{MT1}, there is a unique solution $u^{(1)}\in \cH^1_{p,\cD^T}(\cQ^T)$  to \eqref{220203@eq1} with $g_i\mathbb{I}_{(t_0-R^2,t_0)}$ and $f\mathbb{I}_{(t_0-R^2,t_0)}$ in place of $g_i$ and $f$, which satisfies
\begin{equation}\label{eqn-201006-0921}
\|U^{(1)}\|_{L_p(\cQ^T)} \lesssim \left(\int_{t_0-R^2}^{t_0}\Phi_F(t)^p\,dt\right)^{1/p}.
\end{equation}
Here $U^{(1)}=|Du^{(1)}|+\sqrt{\lambda}|u^{(1)}|$. Let $u^{(2)}:=u-u^{(1)}$.

Next, for any point $X_0=(x_0,t_0)$ with $x_0\in\overline{\Omega}$, since $u^{(2)}$ satisfies a homogeneous equation in $\cQ_R(X_0)$, by Lemma \ref{lem-211217-0940} $(b)$ we see that  $u^{(2)}\in \cH^1_{\hat{p}}(\cQ_{R/2}(X_0))$ and
\begin{equation}\label{eqn-211224-0857}
		(|U^{(2)}|^{\hat{p}})^{1/\hat p}_{\cQ_{R/2}(X_0)} \lesssim (U^{(2)})_{\cQ_{R}(X_0)},
\end{equation}
where $U^{(2)}=|Du^{(2)}|+\sqrt{\lambda} |u^{(2)}|$.
We claim that, for any $X_0=(t_0,x_0)$, we can find positive functions $W$ and $V$ satisfying
	\begin{equation}\label{eqn-220303-1037}
		U^{(2)} \leq W + V\quad \text{in}\,\,\cQ_{R/16}(X_0),
	\end{equation}
	\begin{equation}\label{eqn-201006-0919}
		\norm{W}_{L_p(\cQ_{R/16}(X_0))} \lesssim (\theta+\gamma)^{1/2-1/\hat{p}} \norm{U^{(2)}}_{L_p(\cQ_R(X_0))},
	\end{equation}
	and
	\begin{equation}\label{eqn-201006-0919-2}
		\norm{V}_{L_\infty^tL_p^x(\cQ_{R/16}(X_0))} \lesssim R^{-2/p}\norm{U^{(2)}}_{L_p(\cQ_R(X_0))}.
	\end{equation}
Let us first focus on the most complicated case -- the mixed case, i.e., when $X_0\in\Gamma$.

\textbf{The second decomposition: approximating $a_{ij}$ and $\Gamma$.}
In this case, we need to approximate $\Gamma$ by a time-independent separation and $a_{ij}$ by its average.  Take the coordinate system in Assumption \ref{A11} $(\gamma;m,M)$, and by translation, we may assume that $X_0=(0,0)$. Let $\chi=\chi(x)$ be the cut-off function on $\bR^d$ satisfying
\begin{align*}
	0\leq \chi\leq 1,\quad |D\chi|\lesssim_d \frac{1+M}{\gamma R},
	\\
	\chi=0 \quad \text{in }\, \{x:x^1<\gamma R, \, x^2>\phi-\gamma R\},
\\
\chi=1 \quad \text{in }\, \bR^d\setminus \{x:x^1<2\gamma R, \, x^2>\phi-2\gamma R\}.
\end{align*}
Then $\chi u^{(2)}$ satisfies
\begin{equation*}		
	\begin{cases}
		\cP_0 (\chi u^{(2)})-\lambda \chi u^{(2)} = D_i g^*_i+f^* & \text{in }\, \cQ_{R/2},\\
		\cB_0 (\chi u^{(2)}) = g^*_in_i& \text{on }\, (-(R/2)^2,0)\times N_{R/2},\\
		\chi u^{(2)} = 0 & \text{on }\, (-(R/2)^2,0)\times D_{R/2},
	\end{cases}
\end{equation*}
where
\begin{equation*}
	\cP_0 u := -u_t + D_i((a^{ij})_{\cQ_R} D_ju)\quad\text{and}\quad \mathcal{B}_0 u := (a^{ij})_{\cQ_R} D_j u n_i
\end{equation*}
have constant coefficients,
\begin{equation*}
	f^*=a^{ij}D_j u^{(2)}D_i\chi,\quad g_i^*=((a^{ij})_{\cQ_R}-a^{ij})D_j (\chi u^{(2)})+a^{ij}u^{(2)} D_j \chi,
\end{equation*}
and
\begin{equation*}
	\begin{split}
		D_{R/2} := \partial \Omega \cap B_{R/2}\cap  \{x: x^2>\phi-\gamma R\},
		\\
		N_{R/2} := \partial \Omega \cap B_{R/2}\cap  \{x: x^2<\phi-\gamma R\}.
	\end{split}
\end{equation*}
We first estimate $(1-\chi)u^{(2)}$.
By H\"older's inequality and our construction of $\chi$,
\begin{align}
			(|&(1-\chi)U^{(2)}|^p)^{1/p}_{\cQ_{R/4}} + (|D\chi u^{(2)}|^p)^{1/p}_{\cQ_{R/4}} \nonumber
			\\&\lesssim (|(1-\chi)U^{(2)}|^p)^{1/p}_{\cQ_{R/4}} + \frac{1}{\gamma R}(|\mathbb{I}_{\operatorname{supp}(D\chi)}u^{(2)}|^p)^{1/p}_{\cQ_{R/4}}\nonumber
		\\&\lesssim \gamma^{1/p-1/\hat{p}}\left((|U^{(2)}|^{\hat{p}})^{1/\hat{p}}_{\cQ_{R/4}} + \frac{1}{\gamma R}(|\mathbb{I}_{\operatorname{supp}(D\chi)}u^{(2)}|^{\hat p})^{1/\hat{p}}_{\cQ_{R/4}}\right)\nonumber
		\\&\lesssim \gamma^{1/p-1/\hat{p}} \left((|U^{(2)}|^{\hat{p}})^{1/\hat{p}}_{\cQ_{R/4}} + (|Du^{(2)}|^{\hat{p}})^{1/\hat{p}}_{\cQ_{R/2}}\right)  \label{eqn-211225-0729}
		\\&\lesssim \gamma^{1/p-1/\hat{p}}(|U^{(2)}|^p)^{1/p}_{\cQ_R}.\label{eqn-211217-1136-1}
\end{align}
Here, in \eqref{eqn-211225-0729}, we applied the boundary Poincar\'e inequality in \cite[Lemma~3.9]{MR4387198} on narrow regions, noting
\begin{equation*}
	\operatorname{dist}(\operatorname{supp}(D\chi)\cap \cQ_{R/4}, D_{R/2}\cap\cQ_{R/4}) < C\gamma R.
\end{equation*}
In \eqref{eqn-211217-1136-1}, we used \eqref{eqn-211224-0857} and H\"older's inequality.

\textbf{The third decomposition.}
Next we decompose $\chi u^{(2)}= u^{(3)} + u^{(4)}$ with
\begin{equation}\label{eqn-211220-0457}
\begin{cases}
\cP_0 u^{(3)}- \lambda u^{(3)} = D_i(g^*_i\mathbb{I}_{\cQ_{R/4}}) + f^*\mathbb{I}_{\cQ_{R/4}} &\quad \text{in }\, \cQ^0,\\
B_0 u^{(3)} = (g_i^*\mathbb{I}_{\cQ_{R/4}}) n_i&\quad \text{on }\, (-\infty,0)\times N_{R/2},\\
u^{(3)} = 0 &\quad \text{on }\, (-\infty,0)\times (\p\Omega\setminus N_{R/2}).
\end{cases}
\end{equation}
Notice that the new separation in the above problem is time-independent but may not satisfy  Assumption \ref{A11} $(b)$.
This is because the intersection of the boundary a small Reifenberg flat domain and a hyperplane might not be Reifenberg flat as the $x^1$-direction of the boundary (cf. Assumption \ref{A11}) at small scales might be almost parallelled to the normal direction of the hyperplane.
For such a reason, we apply \cite[Lemma~3.5]{MR4387198} which only requires the interfacial boundary to be time-independent to obtain the solution $u^{(3)}$ in $\cH^1_2(\cQ^0)$, whereas we are not able to utilize Theorem \ref{MT1} to get the solution in  $\cH^1_p(\cQ^0)$. This fact causes the restriction $p\le 2$.
 Clearly, $u^{(3)}=0$ for $t\leq -(R/4)^2$.
Moreover, we can test \eqref{eqn-211220-0457} by $u^{(3)}$ (with help of the usual Steklov average technique) to obtain
\begin{align}\label{eqn-211224-0851-1}
	(|U^{(3)}|^2)_{\cQ_{R/2}}
	\lesssim
	(|g_i^*|^2)_{\cQ_{R/4}}^{1/2}(|D_i u^{(3)}|^2)_{\cQ_{R/4}}^{1/2} + (|f^*u^{(3)}|)_{\cQ_{R/4}},
\end{align}
where
$$U^{(3)}:=|Du^{(3)}| + \sqrt{\lambda}|u^{(3)}|.$$
Furthermore, by H\"older's inequality, Assumption \ref{A2} $(\theta)$, and the boundary Poincar\'e inequality \cite[Lemma~3.9]{MR4387198} as above,
\begin{equation}\label{eqn-211224-0851-2}
	(|g_i^*|^2)_{\cQ_{R/4}}^{1/2} \lesssim (|u^{(2)}D\chi|^2)^{1/2}_{\cQ_{R/4}} + \theta^{1/2-1/\hat{p}}(|Du^{(2)}|^{\hat{p}})^{1/\hat{p}}_{\cQ_{R/4}} \lesssim (\theta+\gamma)^{1/2-1/\hat{p}}(|U^{(2)}|^{\hat p})^{1/{\hat p}}_{\cQ_{R/2}}.
\end{equation}
Similarly, by H\"older's inequality and \cite[Lemma~3.9]{MR4387198}, we have
\begin{align}
	(|f^*u^{(3)}|^2)_{\cQ_{R/4}}
	\nonumber
	&\lesssim (|\mathbb{I}_{\operatorname{supp}(D\chi)}Du^{(2)}|^2)^{1/2}_{\cQ_{R/4}} (|D\chi u^{(3)}|^2)^{1/2}_{\cQ_{R/4}}\\
	\label{eqn-211224-0851-3}
	& \lesssim \gamma^{1/2-1/\hat{p}}(|U^{(2)}|^{\hat p})^{1/{\hat p}}_{\cQ_{R/2}} (|U^{(3)}|^2)^{1/2}_{\cQ_{R/2}}.
\end{align}
From \eqref{eqn-211224-0851-1} -- \eqref{eqn-211224-0851-3}, canceling $(|U^{(3)}|^2)_{\cQ_{R/2}}^{1/2}$ from both sides, we have
\begin{equation*}
	(|U^{(3)}|^2)_{\cQ_{R/2}}^{1/2} \lesssim (\theta+\gamma)^{1/2-1/\hat{p}}(|U^{(2)}|^{\hat p})^{1/{\hat p}}_{\cQ_{R/2}}.
\end{equation*}
Recall that $p\le 2$. By H\"older's inequality, the inequality above, and \eqref{eqn-211224-0857}, we have
\begin{align}
	(|U^{(3)}|^p)^{1/p}_{\cQ_{R/2}}
	\nonumber
	&\lesssim (|U^{(3)}|^2)^{1/2}_{\cQ_{R/2}} \lesssim (\theta+\gamma)^{1/2-1/\hat{p}}(|U^{(2)}|^{\hat p})^{1/{\hat p}}_{\cQ_{R/2}}\\
	&\lesssim (\theta+\gamma)^{1/2-1/\hat{p}}(|U^{(2)}|^p)^{1/p}_{\cQ_{R}}.\label{eqn-201026-1123}
\end{align}
Let $$W:= |U^{(3)}| + |(1-\chi)U^{(2)}| + |D\chi u^{(2)}|.$$
From \eqref{eqn-211217-1136-1} and \eqref{eqn-201026-1123}, we obtain
\begin{equation}\label{eqn-220304-0230}
	\norm{W}_{L_p(\cQ_{R/4})} \lesssim (\theta+\gamma)^{1/2-1/\hat{p}} \norm{U^{(2)}}_{L_p(\cQ_R)}.
\end{equation}
Let $V:= |Du^{(4)}| + \sqrt{\lambda}|u^{(4)}|$, where $u^{(4)}=\chi u^{(2)} - u^{(3)}\in \cH^1_2(\cQ_R)$ satisfies
\begin{equation*}
\left\{
\begin{aligned}
\cP_0u^{(4)} - \lambda u^{(4)} = 0 &\quad \text{in }\, \cQ_{R/4},\\
B_0 u^{(4)} = 0 &\quad \text{on }\, (-(R/4)^2,0)\times N_{R/4},\\
u^{(4)}=0 &\quad \text{on }\, (-(R/4)^2,0)\times D_{R/4},
\end{aligned}
\right.
\end{equation*}
where
\begin{equation*}
	\begin{split}
		D_{R/4}:= \partial \Omega \cap B_{R/4}\cap  \{x: x^2>\phi-\gamma R\},
		\\
		N_{R/4}:= \partial \Omega \cap B_{R/4}\cap  \{x: x^2<\phi-\gamma R\}.
	\end{split}
\end{equation*}
By \cite[Proposition~4.1]{MR4387198} (noting that $|\p_t V|\leq |Du_t^{(4)}|+\sqrt{\lambda} |u_t^{(4)}|$) and H\"older's inequality, we have
\begin{align}
	\norm{\p_t V}_{L_p(\cQ_{R/8})}
\nonumber
&\lesssim R^{(d+2)/p-2-(d+2)}\norm{V}_{L_1(\cQ_{R/4})}\\
\label{eqn-201023-1123}
&\lesssim R^{-2}\norm{V}_{L_p(\cQ_{R/4})}.
\end{align}
Now we use the Sobolev embedding (in $t$) to obtain, for any $x\in \Omega_{R/16}$,
\begin{align*}
	\norm{V(\cdot,x)}_{L_\infty((-(R/8)^2,0))} &\lesssim R^{2-2/p}\norm{\p_t V(\cdot,x)}_{L_p((-(R/8)^2,0))} + R^{-2/p}\norm{V(\cdot,x)}_{L_p((-(R/8)^2,0))} .
\end{align*}
Taking the $L_p(\Omega_{R/8})$ norm in $x$, using \eqref{eqn-201023-1123}, and noting
$$
V\leq |D(\chi u^{(2)})| + \sqrt{\lambda}|\chi u^{(2)}| + |U^{(3)}|,
$$
we obtain
\begin{equation}\label{eqn-220304-0231}
	\begin{split}
		R^{2/p}\norm{V}_{L_\infty^tL_p^x(\cQ_{R/8})}
		&\lesssim R^{2/p}\norm{V}_{L_p^xL_\infty^t(\cQ_{R/8})}
		\lesssim \norm{V}_{L_p(\cQ_{R/4})}
		\\&\lesssim \norm{D(\chi u^{(2)})}_{L_p(\cQ_{R/4})} + \sqrt{\lambda}\norm{\chi u^{(2)}}_{L_p(\cQ_{R/4})} + \norm{U^{(3)}}_{L_p(\cQ_{R/4})}
		\\&\lesssim \norm{U^{(2)}}_{L_p(\cQ_{R})}.
	\end{split}
\end{equation}
In the last inequality, we used \eqref{eqn-211217-1136-1} for the estimate of $D\chi u^{(2)}$ and also \eqref{eqn-201026-1123}.
From the construction of $W$ and $V$, \eqref{eqn-220304-0230}, and \eqref{eqn-220304-0231}, we complete the construction satisfying \eqref{eqn-220303-1037} -- \eqref{eqn-201006-0919-2} when $X_0\in\Gamma$.
	
Next, for $X_0$ having one of the following three positions: $\cQ_R(X_0)\subset\cQ$ (interior), $\cQ_R(X_0)\cap\p\cQ\subset\cD$ (pure Dirichlet), or $\cQ_R(X_0)\cap\p\cQ\subset\cN$ (pure Neumann), the construction is similar. Actually it is simpler since no approximation of $\Gamma$ is presented. From these, the construction centered at any point $X_0$ can be achieved by a standard scaling and covering argument. The details are omitted.

Now we cover $(-(R/16)^2,0)\times \Omega$ with $\bigcup_{k}\cQ_{R/16}(X_0^{(k)})$  with the number of overlapping bounded by a number independent of $R$.
On each $\cQ_{R/16}(X_0^{(k)})$, we can define $W^{(k)}$ and $V^{(k)}$ as above. Then let
\begin{equation}\label{eqn-220219-0830}
	\Phi_{W,R}(t):=\bigg(\sum_k\bignorm{W^{(k)}(t,\cdot)\mathbb{I}_{\cQ_{R/16}(X_0^{(k)})}}^p_{L_p(\Omega)} + \bignorm{U^{(1)}(t,\cdot)}^p_{L_p(\Omega)}\bigg)^{1/p}
\end{equation}
and
\begin{equation}\label{eqn-220219-0833}
	\Phi_{V,R}(t):=\bigg(\sum_k\bignorm{V^{(k)}(t,\cdot)\mathbb{I}_{\cQ_{R/16}(X_0^{(k)})}}^p_{L_p(\Omega)}\bigg)^{1/p}.
\end{equation}
We immediately have $$\Phi_U(t) \leq \Phi_{W,R}(t) + \Phi_{V,R}(t)\quad \forall t\in(-(R/16)^2, 0).$$
Furthermore, from \eqref{eqn-220219-0830}, \eqref{eqn-201006-0919} with centers $X^{(k)}_0$, the fact $U^{(2)}\leq U + U^{(1)}$, and \eqref{eqn-201006-0921},
\begin{equation*}
\begin{split}
	&\int_{-(R/16)^2}^{0}\Phi_{W,R}(t)^p\,dt\\
&\lesssim (\theta+\gamma)^{p/2-p/\hat{p}}\int_{-R^2}^{0}
\norm{U^{(2)}(t,\cdot)}_{L_p(\Omega)}^p\,dt + \int_{-R^2}^{0} \Phi_F(t)^p\,dt\\
	&\lesssim (\theta+\gamma)^{p/2-p/\hat{p}}\int_{-R^2}^{0} \big(\norm{U^{(1)}(t,\cdot)}_{L_p(\Omega)}^p + \norm{U(t,\cdot)}_{L_p(\Omega)}^p{\big)}\,dt + \int_{-R^2}^{0} \Phi_F(t)^p\,dt\\
	&\lesssim (\theta+\gamma)^{p/2-p/\hat{p}}\int_{-R^2}^{0} \Phi_U(t)^p\,dt + \int_{-R^2}^{0} \Phi_F(t)^p\,dt.
\end{split}
\end{equation*}
This proves \eqref{eqn-201027-1035-1}. Similarly, \eqref{eqn-201027-1035-2} can be obtained from \eqref{eqn-220219-0833}, \eqref{eqn-201006-0919-2}, $U^{(2)}\leq U + U^{(1)}$, and \eqref{eqn-201006-0921}. This finishes the proof of the time-dependent case.

{\it Case 2: time-independent $\Gamma$, $p\in \big(\frac{2(m+2)}{m+3}, \frac{2(m+2)}{m+1}\big)$, and $\hat{p}\in \big(p, \frac{2(m+2)}{m+1}\big)$}.
We define $u^{(1)}$ as before and let $u^{(2)}=u-u^{(1)}$. For the local decomposition of $U^{(2)}$, in this case, we do not need to employ the cutoff argument with $\chi$.
Hence, the proof is simpler and the restriction $p\le 2$ is no longer needed.
  Let us give a sketch. We first freeze the coefficients by solving
$$
\begin{cases}
	\cP_0 u^{(3)}-\lambda u^{(3)} = D_i\big(((a^{ij})_{\cQ_R}-a^{ij})D_j u^{(2)}\mathbb{I}_{\cQ_{R/4}}\big) & \text{in }\, \cQ^0,\\
	\cB_0 u^{(3)} = ((a^{ij})_{\cQ_R}-a^{ij})D_j u^{(2)}\mathbb{I}_{\cQ_{R/4}}n_i& \text{on }\, \cN^0,\\
	u^{(3)} = 0 & \text{on }\, \cD^0.
\end{cases}
$$
By Theorem \ref{MT1}, the solution $u^{(3)}\in \cH^1_p(\cQ^0)$ exists and satisfies
\begin{equation}\label{eqn-220219-0858}
	\norm{U^{(3)}}_{L_p(\cQ_{R/4})} \lesssim \norm{(a^{ij})_{\cQ_R}-a^{ij})D_j u^{(2)}}_{L_p(\cQ_{R/4})} \lesssim (\theta+\gamma)^{1/p-1/\hat{p}} \norm{U^{(2)}}_{L_p(\cQ_R)}.
\end{equation}
In the last inequality, we use H\"older's inequality and the reverse H\"older's inequality as in \eqref{eqn-201026-1123}. We define
\begin{equation*}
	W:= U^{(3)}\quad\text{and}\quad V:=|Du^{(4)}|+\sqrt{\lambda}|Du^{(4)}|,
\end{equation*}
where $u^{(4)}:=u^{(2)}-u^{(3)}$. Since $v$ satisfies a homogeneous equation with time-independent separation satisfying the condition $(b)$ of Assumption \ref{A11} $(\gamma;m, M)$ in $\cQ_{R/4}$, by Lemma \ref{lem-200913-1026} and \eqref{eqn-220219-0858},
\begin{equation*}
	R^{2/p}\norm{V}_{L_\infty^tL_p^x(\cQ_{R/16})} \lesssim \norm{V}_{L_p(\cQ_{R/4})} \lesssim \norm{U^{(2)}}_{L_p(\cQ_{R/4})} + \norm{U^{(3)}}_{L_p(\cQ_{R/4})} \lesssim \norm{U^{(2)}}_{L_p(\cQ_R)}.
\end{equation*}
The rest of the proof remains the same as the time-dependent case.
\end{proof}

\subsection{Level set estimates}
In this section, we focus on the case when $\Gamma$ is time-dependent and $p\in \big(\frac{2(m+2)}{m+3},2\big]$. When $\Gamma$ is time-independent and $p\in \big(\frac{2(m+2)}{m+3}, \frac{2(m+2)}{m+1}\big)$, some minor changes are needed. See Section \ref{sec-220226-0919}.
For a function $h\in L_{1,{\rm loc}}(-\infty,T)$, we define its ($1$ dimensional) maximal function by
\begin{equation*}
	\mathcal{M}(h)(t_0) := \sup_{(a,b)\ni t_0}\dashint_a^b |h(t)|\mathbb{I}_{(-\infty,T)} \,dt.
\end{equation*}
We will estimate the following level sets
\begin{equation*}
	\mathcal{A}(s):=\big\{t\in(-\infty,T):\mathcal{M}(\Phi_U^p)(t)^{1/p}>s\big\},
\end{equation*}
\begin{equation}\label{eqn-220226-0925}
	\mathcal{B}(s):=\big\{t\in(-\infty,T):\mathcal{M}(\Phi_U^p)(t)^{1/p} + (\theta+\gamma)^{1/\hat{p}-1/2}\mathcal{M}(\Phi_F^p)(t)^{1/p}>s\big\}.
\end{equation}
\begin{lemma}\label{lem-211226-1120}
Let $p$, $\hat p$, and $u$ be as in Lemma \ref{lem-201027-1030}.
	There exists a constant $\kappa=\kappa(d,\lambda,M,p,\hat{p})>5$,
such that for any interval $(a,b)\subset(-\infty,T)$ with $|b-a|\leq (R_0/32)^2$ and
	\begin{equation}\label{eqn-211227-0707}
		|(a,b)\cap\mathcal{A}(\kappa s)|>(\theta+\gamma)^{p/2-p/\hat p}|b-a|,
	\end{equation}
we must have
\begin{equation*}
	(a,b)\subset\mathcal{B}(s).
\end{equation*}
\end{lemma}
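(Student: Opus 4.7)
The plan is to argue by contradiction: fix some $t_1 \in (a,b) \setminus \mathcal{B}(s)$ and show that this forces $|(a,b) \cap \mathcal{A}(\kappa s)| \le (\theta+\gamma)^{p/2 - p/\hat p}|b-a|$, contradicting \eqref{eqn-211227-0707}. The mechanism is the classical good-$\lambda$/weak-$(1,1)$ argument in one variable: use Lemma \ref{lem-201027-1030} to split $\Phi_U \le \Phi_{W,R} + \Phi_{V,R}$ on a slightly enlarged interval containing $(a,b)$, use $t_1$ to control the ``large-scale'' averages, and then apply the Hardy--Littlewood maximal inequality to $\Phi_{W,R}^p$.

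\textbf{Step 1 (decomposition on a buffered interval).} Using the hypothesis $|b-a|\le (R_0/32)^2$, I would take $R = 32\sqrt{|b-a|} \le R_0$ and $t_0 = \min(b + 2|b-a|, T)\le T$, and apply Lemma \ref{lem-201027-1030} to obtain $\Phi_{W,R}, \Phi_{V,R}$ on $I := (t_0 - 4|b-a|, t_0) \supset (a,b)$. By construction, $I$ has at least $|b-a|$ slack past $a$ on the left, and either $\ge |b-a|$ slack past $b$ on the right, or else the right endpoint equals $T$, in which case the truncation $\mathbb{I}_{(-\infty,T)}$ in the definition of $\mathcal{M}$ renders the right-hand boundary harmless. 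The condition $t_1 \notin \mathcal{B}(s)$ gives $\mathcal{M}(\Phi_U^p)(t_1) \le s^p$ and $\mathcal{M}(\Phi_F^p)(t_1) \le s^p(\theta+\gamma)^{p/2 - p/\hat p}$. Since $(t_0 - R^2, t_0)\ni t_1$, I plug these into \eqref{eqn-201027-1035-1}--\eqref{eqn-201027-1035-2} to obtain
\[
\int_I \Phi_{W,R}^p \lesssim |b-a|\, s^p (\theta+\gamma)^{p/2 - p/\hat p}, \qquad \sup_I \Phi_{V,R} \lesssim s.
\]

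\textbf{Step 2 (pointwise bound on $\mathcal{M}(\Phi_U^p)$ over $(a,b)$).} For each $t \in (a,b)$ and each interval $(c,d)\ni t$, the $\mathbb{I}_{(-\infty,T)}$ factor lets me reduce to $(c,d) \subset (-\infty, T]$. If $(c,d) \subset I$, the decomposition directly gives $\dashint_c^d \Phi_U^p \le 2^{p-1}\bigl(\mathcal{M}(\Phi_{W,R}^p \mathbb{I}_I)(t) + \sup_I \Phi_{V,R}^p\bigr) \lesssim \mathcal{M}(\Phi_{W,R}^p \mathbb{I}_I)(t) + s^p$. Otherwise $(c,d)\not\subset I$, and the $|b-a|$ slack forces $|c-d| > |b-a| \ge |t - t_1|$, so the smallest interval containing $(c,d) \cup \{t_1\}$ has length at most $2|c-d|$, yielding $\dashint_c^d \Phi_U^p \le 2\mathcal{M}(\Phi_U^p)(t_1) \le 2 s^p$. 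Taking the supremum over $(c,d)$,
\[
\mathcal{M}(\Phi_U^p)(t) \lesssim \mathcal{M}(\Phi_{W,R}^p \mathbb{I}_I)(t) + s^p \quad \text{for all } t \in (a,b).
\]

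\textbf{Step 3 (weak-$(1,1)$ and contradiction).} Picking $\kappa > 5$ large enough, depending only on $d, \Lambda, M, p, \hat p$, to absorb the additive $s^p$ above, every $t \in (a,b) \cap \mathcal{A}(\kappa s)$ must satisfy $\mathcal{M}(\Phi_{W,R}^p \mathbb{I}_I)(t) \gtrsim \kappa^p s^p$. The one-dimensional weak-$(1,1)$ maximal inequality then yields
\[
|(a,b) \cap \mathcal{A}(\kappa s)| \;\lesssim\; \frac{1}{\kappa^p s^p} \int_I \Phi_{W,R}^p \;\lesssim\; \frac{|b-a|\,(\theta+\gamma)^{p/2 - p/\hat p}}{\kappa^p},
\]
and enlarging $\kappa$ further (still depending only on the stated parameters) makes the implicit constant $\le 1$, which contradicts \eqref{eqn-211227-0707}. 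The main obstacle I anticipate is Step 2: intervals $(c,d)$ that straddle the boundary of $(a,b)$ with a thin sliver on one side must be handled uniformly, and with the naive choice $R=16\sqrt{|b-a|}$ (the smallest radius for which the decomposition just covers $(a,b)$) the extension trick fails. Doubling to $R = 32\sqrt{|b-a|}$ installs the $|b-a|$ safety buffer that guarantees any $(c,d) \not\subset I$ is long enough to absorb $t_1$, and this is exactly what motivates the hypothesis $|b-a| \le (R_0/32)^2$.
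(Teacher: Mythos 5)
Your proof is correct and follows essentially the same route as the paper's: argue by contradiction, fix $t_1\notin\mathcal{B}(s)$, apply Lemma \ref{lem-201027-1030} on a buffered interval so that $t_1$ controls the coarse-scale averages of $\Phi_U^p$ and $\Phi_F^p$, then bound $|(a,b)\cap\mathcal{A}(\kappa s)|$ via the one-dimensional Hardy--Littlewood weak-$(1,1)$ inequality applied to $\Phi_{W,R}^p\mathbb{I}_I$. The only cosmetic differences are in the choice of buffer (you take $R=32\sqrt{|b-a|}$; the paper uses $(a_1,b_1)=(a-|b-a|/2,\min\{b+|b-a|/2,T\})$ with $R=16\sqrt{|b_1-a_1|}$) and in that you derive a uniform pointwise bound $\mathcal{M}(\Phi_U^p)(t)\lesssim\mathcal{M}(\Phi_{W,R}^p\mathbb{I}_I)(t)+s^p$ on $(a,b)$ by splitting according to whether the test interval fits inside $I$, whereas the paper directly shows that any test interval attaining the $\kappa s$ level must have length at most $|b-a|/2$ and hence sits inside the buffer.
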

\begin{proof}
	We prove by contradiction. Suppose that for some interval $(a,b)$ satisfying \eqref{eqn-211227-0707}, there exists some $t_1\in(a,b)\setminus \mathcal{B}(s)$, i.e.,
	\begin{equation}\label{eqn-211226-1020}
		\mathcal{M}(\Phi_U^p)(t_1)^{1/p} + (\theta+\gamma)^{1/\hat{p}-1/2}\mathcal{M}(\Phi_F^p)(t_1)^{1/p}\leq s.
	\end{equation}
Let
\begin{equation*}
	(a_1, b_1):=(a-|b-a|/2,\min\{b+|b-a|/2, T\}),\quad R=16\sqrt{|b_1-a_1|},
\end{equation*}
and observe that $(a_1, b_1)=(b_1-(R/16)^2, b_1)$.
By Lemma \ref{lem-201027-1030} with $t_0=b_{1}$ and such $R$, we have the decomposition
\begin{equation}\label{eqn-220215-1224}
	\Phi_U\leq \Phi_{W,R} + \Phi_{V,R}\quad\text{on}\,\,(a_1, b_1).
\end{equation}
Moreover, from \eqref{eqn-201027-1035-1}, \eqref{eqn-201027-1035-2}, \eqref{eqn-211226-1020}, and the fact that $t_1\in(a,b)\subset (b_1-R^2, b_1)$, we have
\begin{equation}\label{eqn-220215-1227}
\Bigg(\dashint_{a_1}^{b_1}
\Phi_{W,R}(t)^p\,dt\Bigg)^{1/p} \leq C_1(\theta+\gamma)^{1/2-1/\hat{p}}s \quad \text{and}\quad\sup_{t\in (a_1,b_1)}\Phi_{V,R}(t) \leq C_1s,
\end{equation}
where $C_1=C_1(d,\lambda,M,p,\hat{p})$.
Now for any $\tilde{t}\in (a,b)\cap\mathcal{A}(\kappa s)$, by definition, there exists an interval $(\tilde{a},\tilde{b})$ containing $\tilde{t}$, with $\tilde{b}\leq T$ and
	\begin{equation}\label{eqn-220215-0134}
		\dashint_{\tilde{a}}^{\tilde{b}}  \Phi_{U}(t)^p\,dt > (\kappa s)^p.
	\end{equation}
Actually, we must have $|\tilde{b}-\tilde{a}|\leq |b-a|/2$, since otherwise
\begin{equation*}
\begin{aligned}
	\mathcal{M}(\Phi_U^p)(t_1) &\geq \dashint_{\tilde{a}-|b-a|}^{\min\{\tilde{b}+|b-a|, T\}} \Phi_U(t)^p\,dt \\
	&\geq \frac{|\tilde{b}-\tilde{a}|}{2|b-a| + |\tilde{b}-\tilde{a}|}\,\dashint_{\tilde{a}}^{\tilde{b}}
\Phi_U(t)^p\,dt > \frac{1}{5}(\kappa s)^p > s^p
\end{aligned}
\end{equation*}
contradicting \eqref{eqn-211226-1020}.
Hence, $(\tilde{a},\tilde{b})\subset (a_1,b_1)$.
From this, \eqref{eqn-220215-1224} -- \eqref{eqn-220215-0134}, and the triangle inequality, we get for any $\tilde{t}\in(a,b)\cap\mathcal{A}(\kappa s)$,
	\begin{equation*}
	\begin{aligned}		\mathcal{M}\big(\Phi_{W,R}^p\mathbb{I}_{(a_1,b_1)}\big)
(\tilde{t})^{1/p}
		&\geq \Bigg(\dashint_{\tilde{a}}^{\tilde{b}}\Phi_{W,R}(t)^p\,dt\Bigg)^{1/p} \\
		&\geq \Bigg(\dashint_{\tilde{a}}^{\tilde{b}}\Phi_{U}(t)^p\,dt\Bigg)^{1/p} - \sup_{t\in(\tilde{a},\tilde{b})}\Phi_{V,R}(t) > \kappa s - C_1 s.
\end{aligned}
\end{equation*}
Hence, by the Hardy-Littlewood maximal function theorem and \eqref{eqn-220215-1227},
\begin{equation*}
\begin{aligned}
	|\mathcal{A}(\kappa s)\cap(a,b)|
	&\leq \big|\big\{\tilde t: \cM \big(\Phi_{W, R}^p \mathbb{I}_{(a_1, b_1)}\big)(\tilde{t})^{1/p}> \kappa s-C_1s\big\}\cap(a,b)\big| \\
	&\leq C(\kappa-C_1)^{-p}
C_1^p(\theta+\gamma)^{p/2-p/\hat{p}}|b-a|.
\end{aligned}
\end{equation*}
Here we also used the fact that $R^2\approx |b-a|$. Choosing $\kappa$ large enough, we reach a contradiction with \eqref{eqn-211227-0707}. The lemma is proved.
\end{proof}
From Lemma \ref{lem-211226-1120}, the Hardy-Littlewood maximal function theorem
\begin{equation*}
	|\mathcal{A}(\kappa s)|
\leq \|U\|^p_{L_p(\cQ^T)}/(\kappa s)^p,
\end{equation*}
and a measure theoretic lemma called ``crawling of the ink spot'' in \cite{MR0579490,MR0563790}, we have
\begin{corollary}\label{cor-211120-1226}
	For $\kappa$ in Lemma \ref{lem-211226-1120}, $s$ satisfying
	\begin{equation}\label{eqn-220219-1009-1}
		s\geq s_0:= \kappa^{-1}\|U\|_{L_p(\cQ^T)}\big((\theta+\gamma)^{p/2 - p/\hat p}R_0^2/32^2\big)^{-1/p},
	\end{equation}
and $\theta,\gamma$ satisfying
\begin{equation}\label{eqn-220219-1009-2}
	(\theta+\gamma)^{p/2-p/\hat{p}} < 1,
\end{equation}
	we have
	\begin{equation*}
		|\mathcal{A}(\kappa s)|\leq C (\theta+\gamma)^{p/2-p/\hat{p}}|\mathcal{B}(s)|.
	\end{equation*}
\end{corollary}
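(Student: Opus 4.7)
The plan is to apply the one-dimensional \emph{crawling of the ink spot} measure-theoretic covering lemma of Krylov--Safonov \cite{MR0579490,MR0563790} to the level sets $\mathcal{A}(\kappa s)\subset\mathcal{B}(s)\subset(-\infty,T)$, using Lemma \ref{lem-211226-1120} as the local ``density-forces-containment'' input. Since $\kappa>1$, the inclusion $\mathcal{A}(\kappa s)\subset\mathcal{B}(s)$ is immediate, so only the quantitative comparison of their measures is at stake.

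First I would use the weak-type $(1,1)$ bound for the one-dimensional Hardy--Littlewood maximal function to get
\[
|\mathcal{A}(\kappa s)|\leq C(\kappa s)^{-p}\|\Phi_U\|^p_{L_p(-\infty,T)} = C(\kappa s)^{-p}\|U\|^p_{L_p(\cQ^T)}.
\]
The threshold $s_0$ in \eqref{eqn-220219-1009-1} is calibrated precisely so that, for $s\geq s_0$, this bound gives
\[
|\mathcal{A}(\kappa s)|\lesssim (\theta+\gamma)^{p/2-p/\hat p}(R_0/32)^2,
\]
and by \eqref{eqn-220219-1009-2} the factor $(\theta+\gamma)^{p/2-p/\hat p}$ is strictly less than $1$. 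This smallness is exactly what enables the covering argument to proceed using only intervals of length at most $(R_0/32)^2$, i.e., within the range where Lemma \ref{lem-211226-1120} applies.

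Next I would invoke the ink-spot lemma in its one-dimensional form: if $E_1\subset E_2\subset\bR$ are measurable, $\mu\in(0,1)$, and every interval $(a,b)$ satisfying $|(a,b)\cap E_1|>\mu|b-a|$ is already contained in $E_2$, then $|E_1|\leq C\mu|E_2|$. Setting $E_1=\mathcal{A}(\kappa s)$, $E_2=\mathcal{B}(s)$, and $\mu=(\theta+\gamma)^{p/2-p/\hat p}$, Lemma \ref{lem-211226-1120} supplies exactly this implication for intervals of length at most $(R_0/32)^2$. The a priori bound from the previous paragraph guarantees that in the Vitali-type construction one never needs intervals longer than that scale, so the restriction built into Lemma \ref{lem-211226-1120} is harmless.

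The main difficulty I anticipate is not computational but organizational: one must align the density condition and the length cap of the ink-spot lemma with the scale restriction $|b-a|\leq (R_0/32)^2$ in Lemma \ref{lem-211226-1120}. This compatibility is precisely what dictates the form of the lower bound \eqref{eqn-220219-1009-1} on $s$ and the smallness assumption \eqref{eqn-220219-1009-2}; once the bookkeeping is in place, the conclusion $|\mathcal{A}(\kappa s)|\leq C(\theta+\gamma)^{p/2-p/\hat p}|\mathcal{B}(s)|$ is immediate from the covering lemma.
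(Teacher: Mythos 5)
Your proposal is correct and follows essentially the same route as the paper, which itself only sketches the argument and refers to the stopping-time/ink-spot proof in an earlier work: weak $(1,1)$ for the one-dimensional maximal function gives the a priori bound on $|\mathcal{A}(\kappa s)|$, condition \eqref{eqn-220219-1009-1} makes that bound compatible with starting the stopping-time argument at scale $(R_0/32)^2$, and Lemma~\ref{lem-211226-1120} supplies the density-forces-containment step. The only small imprecision is in the role you assign to \eqref{eqn-220219-1009-2}: rather than ``enabling the covering argument to use only short intervals'' (that is what \eqref{eqn-220219-1009-1} buys), the requirement $(\theta+\gamma)^{p/2-p/\hat p}<1$ is what guarantees, via the Lebesgue differentiation theorem, that the shrinking procedure terminates at a.e.\ point of $\mathcal{A}(\kappa s)$.
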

Here we omit the details, which can be found, for instance, in \cite[Proof of Lemma~5.4]{MR4387198}.
The key idea here is a stopping time argument: for any $t\in \mathcal{A}(\kappa s)$, we shrink the interval $(a,b)$ containing $t$ until the first time \eqref{eqn-211227-0707} holds. The condition \eqref{eqn-220219-1009-1} guarantees that we can start this procedure with $|b-a|=(R_0/32)^2$. The condition \eqref{eqn-220219-1009-2} together with the Lebesgue differentiation theorem guarantees that such procedure will stop.

\subsection{Proof of Proposition \ref{prop-200819-0953}}\label{sec-220226-0919}
\begin{proof}[Proof of Proposition \ref{prop-200819-0953}]
We mainly prove for general time-dependent $\Gamma$. For fixed $\hat{p}\in\big(p,\frac{2(m+2)}{m+1}\big)$,
let $\kappa$ and $s_0$ be the constants from Lemma \ref{lem-211226-1120} and Corollary \ref{cor-211120-1226}, respectively.
We also let $\theta$ and $\gamma$ be small numbers satisfying \eqref{eqn-220219-1009-2} to be chosen below.
For $S>s_0$,
	\begin{align}
		&\int_0^{\kappa S} |\mathcal{A}(s)|s^{q-1}\,ds = \kappa^{q} \int_0^{S} |\mathcal{A}(\kappa s)|s^{q-1}\,ds\nonumber
		\\&= \kappa^{q} \int_0^{s_0} |\mathcal{A}(\kappa s)|s^{q-1}\,ds + \kappa^{q} \int_{s_0}^S |\mathcal{A}(\kappa s)|s^{q-1}\,ds\nonumber
		\\&\leq \kappa^{q} \|U\|_{L_p(\cQ^T)}^p\int_0^{s_0} (\kappa s)^{-p}s^{q-1}\,ds + C\kappa^{	q}(\theta+\gamma)^{p/2-p/\hat{p}} \int_0^S |\mathcal{B}(s)|s^{q-1}\,ds.\label{eqn-211226-1132}
	\end{align}
Here in \eqref{eqn-211226-1132}, we applied the Chebyshev inequality and Corollary \ref{cor-211120-1226} for the two terms, respectively. Noting $q>p$ and
\begin{equation*}
	\mathcal{B}(s)\subset \mathcal{A}(s/2)\cup \{t:\mathcal{M}(\Phi_F^p)(t)>2^{-p}s^p(\theta+\gamma)^{p/2-p/\hat{p}}\},
\end{equation*}
using the integral formula for $L_p$ norms in terms of level sets and the Hardy-Littlewood maximal function theorem, we have
\begin{align*}
	&\int_0^{\kappa S} |\mathcal{A}(s)|s^{q-1}\,ds
	\leq C\kappa^{q-p}\|U\|_{L_p(\cQ^T)}^p s_0^{q-p} + C\kappa^{q}(\theta+\gamma)^{p/2-p/\hat{p}}\int_0^S|\mathcal{A}(s/2)|s^{q-1}\,ds
	\\&\quad + C\kappa^{q}(\theta+\gamma)^{p/2-p/\hat{p}}
\int_0^S\big|\big\{t:\mathcal{M}(\Phi_F^p)(t)>2^{-p}s^p(\theta+\gamma)^{p/2-p/\hat{p}}\big\}\big|s^{q-1}\,ds
	\\&\leq C_{\theta,\gamma,\kappa}
\big(\|U\|_{L_p(\cQ^T)}^q R_0^{2(p-q)/p} + \|\Phi_F\|_{L_q((-\infty, T))}^q\big) + C\kappa^{q}(\theta+\gamma)^{p/2-p/\hat{p}}
\int_0^{S/2} |\mathcal{A}(s)|s^{q-1}\,ds.
\end{align*}
Absorbing the integral involving $\mathcal{A}(s)$ on the right-hand side by choosing $\theta$ and $\gamma$ small enough, passing $S\rightarrow \infty$, we reach the desired estimate.

When $\Gamma$ is time-independent, the proof is almost the same if we change the definition of $\mathcal{B}(s)$ in \eqref{eqn-220226-0925} to
\begin{equation*}
	\mathcal{B}(s):=\big\{t\in(-\infty,T):\mathcal{M}(\Phi_U^p)(t)^{1/p} + (\theta+\gamma)^{1/\hat{p}-1/p}\mathcal{M}(\Phi_F^p)(t)^{1/p}>s\big\}.
\end{equation*}
The details are omitted.
\end{proof}

\section{Proof of Theorem \ref{thm-201022-0843}}		\label{S5}
With Proposition \ref{prop-200819-0953} at hand, now we prove Theorem \ref{thm-201022-0843}.
\begin{proof}[Proof of Theorem \ref{thm-201022-0843}]
We consider the following three cases.

{\it Case 1: time-dependent $\Gamma$, $p\leq 2$}. By approximation, we may assume that $f$ and $g_i$ have compact support in time, and hence $f, g_i\in L_q^tL_p^x\subset L_p^{t,x}$. By Theorem \ref{MT1}, we can find a solution $u\in \cH^1_p$. Furthermore, by Proposition \ref{prop-200819-0953}, $u\in \cH^1_{q,p}$. To show \eqref{eqn-201028-0521}, we are left with absorbing the $U$ term on the right-hand side of \eqref{eqn-201028-0508}. This step is standard, which can be done by multiplying a cut-off function in the $t$ variable with sufficiently small support, using H\"older's inequality, and then choosing $\lambda$ large enough. Such argument can be found in the proof of \cite[Corollary~5.2]{MR4261267}. The range of $(p,q)$ corresponds to the shaded trapezoid area in Figure \ref{fig:1/p-1/q}.

{\it Case 2: time-dependent $\Gamma$, $p> 2$}. In this case, we interpolate the $\cH^1_{\tilde{q},2}$ and $\cH^1_{\tilde{p}}$ results, where $\tilde{q}>2$ can be sufficiently large and $\tilde{p}\in (2, 2(m+2)/(m+1))$. To be more precise, let $\vartheta\in(0,1)$ be the number such that
$$\frac{1}{p}= \frac{\vartheta}{2} + \frac{1-\vartheta}{\tilde{p}}\quad\text{and}\quad \frac{1}{q}= \frac{\vartheta}{\tilde{q}} + \frac{1-\vartheta}{\tilde{p}}.$$
Then the $\cH^1_{q,p}$ solvability can be obtained from the $\cH^1_{\tilde{p}}$ and $\cH^1_{\tilde{q},2}$ results by applying the Riesz-Thorin interpolation theorem. Here we also used the following fact which can be found, for instance, in \cite[Theorem~1.18.1]{MR0500580}:
\begin{equation*}
	[L^t_{\tilde{q}}(L^x_2), L^t_{\tilde{p}}(L^x_{\tilde{p}})]_\vartheta = L^t_{q}([L^x_2,L^x_{\tilde{p}}]_\vartheta),
\end{equation*}
where $[\, \cdot\, , \, \cdot\, ]_\vartheta$ represents the complex interpolation space. The range of $(p,q)$ corresponds to the shaded triangle area in Figure \ref{fig:1/p-1/q}.

{\it Case 3: time-independent $\Gamma$, $p\in \big(\frac{2(m+2)}{m+3}, \frac{2(m+2)}{m+1}\big)$.} For $q>p$, the proof is exactly the same as the first case by using the last assertion of Proposition \ref{prop-200819-0953}. For $q<p$, the result can be obtained by duality.

This finishes the proof of Theorem \ref{thm-201022-0843}.
\end{proof}

\section*{Acknowledgments}
	The authors would like to thank the anonymous referee for careful reading of the manuscript and helpful comments.

\section*{Data availability statements}
Data sharing not applicable to this article as no datasets were generated or analysed during the current study.

\bibliographystyle{plain}

\end{document}